\author{Shoji Yokura$^{(*)}$}
\address
{Department of Mathematics and Computer Science, 
Faculty of Science, 
Kagoshima University, 21-35 Korimoto 1-chome, Kagoshima 890-0065, Japan}
\email {yokura@sci.kagoshima-u.ac.jp}
\title
{Motivic Milnor classes}
\thanks {(*) Partially supported by Grant-in-Aid for Scientific Research
(No. 21540088), the Ministry of Education, Culture, Sports, Science and Technology (MEXT), and JSPS Core-to-Core Program 18005, Japan}
\keywords{}
\begin{document} 
\numberwithin{equation}{section}
\newtheorem{thm}[equation]{Theorem}
\newtheorem{pro}[equation]{Proposition}
\newtheorem{prob}[equation]{Problem}
\newtheorem{cor}[equation]{Corollary}
\newtheorem{con}[equation]{Conjecture}
\newtheorem{lem}[equation]{Lemma}
\theoremstyle{definition}
\newtheorem{ex}[equation]{Example}
\newtheorem{defn}[equation]{Definition}
\newtheorem{rem}[equation]{Remark}
\renewcommand{\rmdefault}{ptm}
\def\alp{\alpha}
\def\be{\beta}
\def\jeden{1\hskip-3.5pt1}
\def\om{\omega}
\def\bigstar{\mathbf{\star}}
\def\ep{\epsilon}
\def\vep{\varepsilon}
\def\Om{\Omega}
\def\la{\lambda}
\def\La{\Lambda}
\def\si{\sigma}
\def\Si{\Sigma}
\def\Cal{\mathcal}
\def\m {\mathcal}
\def\ga{\gamma}
\def\Ga{\Gamma}
\def\de{\delta}
\def\De{\Delta}
\def\bF{\mathbb{F}}
\def\bH{\mathbb H}
\def\bPH{\mathbb {PH}}
\def \bB{\mathbb B}
\def \bA{\mathbb A}
\def \bOB{\mathbb {OB}}
\def \bM{\mathbb M}
\def \bOM{\mathbb {OM}}
\def \calB{\mathcal B}
\def \bK{\mathbb K}
\def \bG{\mathbf G}
\def \bL{\mathbf L}
\def\bN{\mathbb N}
\def\bR{\mathbb R}
\def\bP{\mathbb P}
\def\bZ{\mathbb Z}
\def\bC{\mathbb  C}
\def \bQ{\mathbb Q}
\def\op{\operatorname}

\begin{abstract} The Milnor class is a generalization of the Milnor number, defined as the difference (up to sign) of Chern--Schwartz--MacPherson's class and Fulton--Johnson's canonical Chern class of a local complete intersection variety in a smooth variety. In this paper we introduce a ``motivic" Grothendieck group $K^{\mathcal Prop} _{\ell.c.i}(\mathcal V/X \xrightarrow{h}  S)$ and natural transformations from this Grothendieck 
group to the homology theory. We capture the Milnor class, more generally Hirzebruch--Milnor class, as a special value of a distinguished element under these natural transformations. We also show a Verdier-type Riemann--Roch formula for our motivic Hirzebruch-Milnor class. We use Fulton--MacPherson's bivariant theory and the motivic Hirzebruch class.
\end{abstract}

\maketitle

\section{Introduction}\label{intro} 

The Milnor class is defined for a local complete intersection variety $X$ in a non-singular variety $M$ as follows.
The local complete intersection variety $X$ defines a normal bundle $N_X$ in $M$, from which we can define the virtual tangent bundle $T_X$ of $X$ by
$$T_X := TM|_X - N_XM$$
which  is a well-defined element of the Grothendieck group $K^0(X)$.  
Then Fulton-Johnson's or Fulton's canonical (Chern) class of $X$ (see \cite{Fulton-Johnson} and \cite{Fulton-book}) is defined by 
$$c_*^{FJ}(X) := c(T_X) \cap [X].$$
Here $c(T_X)$ is the total Chern class of the virtual bundle $T_X$.

In general, Fulton-Johnson's and Fulton's canonical (Chern) classes  are defined for any scheme $X$ embedded as a closed subscheme of a non-singular variety $M$(see \cite[Example 4.2.6]{Fulton-book}): Fulton--Johnson's canonical class $c_*^{FJ}(X)$ (\cite[Example 4.2.6 (c)]{Fulton-book}) is defined by
$$c(TM|_X) \cap s(\m N_XM),$$
where $TM$ is the tangent bundle of $M$ and $s(\m N_XM)$ is the Segre class of the conormal sheaf $\m N_XM$ of $X$ in $M$ \cite[\S 4.2]{Fulton-book}. Fulton's canonical class $c_*^F(X)$ (\cite[Example 4.2.6 (a)]{Fulton-book}) is defined by
$$c(TM|_X) \cap s(X,M),$$
where $s(X,M)$ is the relative Segre class \cite[\S 4.2]{Fulton-book}.
As shown in \cite[Example 4.2.6]{Fulton-book}, for a local complete intersection variety $X$ in a non-singular variety $M$ these two classes are both equal to $c(T_X) \cap [X]$. \\

On the other hand there is another well-known notion of Chern class for possibly singular varieties. That is Chern--Schwartz--MacPherson's class $c_*(X)$ \cite{MacPherson1, Schwartz1, Schwartz2, Schwartz3, Brasselet-Schwartz}. Then the Milnor class of the local complete intersection variety $X$, denoted by $\m M(X)$, is defined by, up to sign, the difference of Fulton--Johnson's class and Chern--Schwarz--MacPherson's class $c_*(X)$; more precisely
$$\m M(X) := (-1)^{\op {dim}X}\left (c_*^{FJ}(X) - c_*(X) \right ).$$
Since Chern--Schwarzt--MacPherson's class $c_*(X)$ and Fulton--Johnson's class $c_*^{FJ}(X)$ are identical for a nonsingular variety, the Milnor class is certainly supported on the singular locus of the given variety, thus is an invariant of singularities. Prototypes of the Milnor class were studied by P. Aluffi \cite {Aluffi-Duke, Aluffi}, A. Parusi\'nski \cite{Parusinski, Parusinski-London}, A. Parusi\'nski  and P. Pragacz \cite{Parusinski-Pragacz-JAG0} and T. Suwa \cite{Suwa3}. 
Many people have been investigating on the Milnor class from their own viewpoints or interests, and many papers are now available \cite{Aluffi, Aluffi-lecture, Brasselet, BLSS1, BLSS2, Maxim, Parusinski-lecture-note, Parusinski-Pragacz-JAMS, Parusinski-Pragacz-JAG, Seade, SeSu, Suwa-lecture-note, Yokura-VRR-Chern, Yokura-contemporary}. A category-functorial aspect of the Milnor class is its connection to the so-called Verdier--Riemann--Roch theorem for MacPherson's Chern class \cite{Yokura-topology, Schuermann-VRR}. 

Some functoriality of the Milnor class was investigated in \cite{Yokura-topology}, but so far it has never been captured as a natural transformation from a certain covariant functor to the homology theory. In this paper we try to capture the Milnor class from a motivic viewpoint and we show that in fact we can capture it as a natural transformation from a pre-motivic covariant functor to
the homology theory. For this we need to use the motivic Hirzebruch class \cite{BSY1, BSY2}. The key idea comes from the construction of a universal bivariant theory given in \cite{Yokura-obt}.\\

In \cite{CMSS} (also see  \cite{CLMS1, CLMS2, CMS1, CMS2, CS2, CS3}) Sylvain Cappell et al. independenetly consider the motivic Hirzebruch--Milnor class and they describe it in terms of other invariants of singularities, thus dealing more with singularities.  Our present work is more category-functorial, compared with \cite{CMSS}.
 
 \section{Motivic Hirzebruch classes}
 In the following sections we use the motivic Hirzebruch class \cite{BSY1, BSY2}, thus we very quickly recall some ingredients which are needed later.
 
 Let $\m V$ denote the category of complex algebraic varieties. The relative Grothendieck group $K_0(\Cal V / X)$ of a variety $X$ is the quotient of the free abelian group $\op{Iso}^{\Cal Prop}(\m V/X) $ of isomorphism classes $[V  \xrightarrow {h} X]$ of proper morphisms to $X$ , modulo the following {\it additivity} relation:
$$[V   \xrightarrow {h}  X] = [Z \hookrightarrow V  \xrightarrow {h}  X] +  [V \setminus Z \hookrightarrow Y   \xrightarrow {h} X]$$
for $Z \subset Y$ a closed subvariety of $Y$. We set the quotient homomorphism by
$$\Theta: \op{Iso}^{\Cal Prop}(X) \to K_0(\Cal V / X).$$
From now on the equivalence class $\Theta([V   \xrightarrow {h}  X])$ of the isomorphism class $[V   \xrightarrow {h}  X]$ is denoted by the same symbol $[V   \xrightarrow {h}  X]$ unless some possible confusion occurs.

\begin{rem}Furthermore it follows from Hironaka's resolution of singularities that the restriction $\Theta ^{sm}:= \Theta |_{\op{Iso}^{\Cal Prop}(Sm/X)}$ of $\Theta$ to the subgroup $\op{Iso}^{\Cal Prop}(Sm/X)$ of  isomorphism classes $[V  \xrightarrow {h} X]$ of proper morphisms from \underline {smooth varieties} $V$ to $X$ is surjective: 
$$\Theta^{sm} : \op{Iso}^{\Cal Prop}(Sm/X) \to  K_0( \Cal V /X).$$
Here we just remark that F. Bittner \cite {Bittner} identified the kernel of the above  map $\Theta^{sm}: \op{Iso}^{\Cal Prop}(Sm/X) \to  K_0( \Cal V /X)$ by some ``blow-up relation", for the details of which see \cite {Bittner}.
This ``blow-up relation" plays an important role for constructing a bivariant analogue of the motivic Hirzebruch classes.
Since we do not deal with this bivariant analogue, we do not go further into details of this ``blow-up relation".\\
\end{rem}

If we use the above ``pre-motivic" group $\op{Iso}^{\Cal Prop}(Sm/X) $ we can get the following 

\emph{``pre-motivic" characteristic classes of singular varieties for an arbitrary characteristic class $c\ell$ of complex vector bundles}.

For a proper morphism $f: X \to Y$ we have the obvious pushforward
$$f_* :\op{Iso}^{\Cal Prop}(Sm/X)  \to \op{Iso}^{\Cal Prop}(Sm/Y)$$
defined by $f_*([ V   \xrightarrow {h}  X]) := [V   \xrightarrow {f \circ h}  Y].$ Let $c\ell$ be any characteristic class of complex vector bundles with values  in the cohomology theory $H^*(\quad)\otimes R$. Then we define 
$$\ga_{c\ell}: \op{Iso}^{\Cal Prop}(Sm/X) \to H_*^{BM}(X) \otimes R$$ by
$$\ga_{c\ell}([ V   \xrightarrow {h}  X]):= h_*(c\ell(TV) \cap [V]).$$
Then it is clear that 
$$\ga_{c\ell}:\op{Iso}^{\Cal Prop}(Sm/\quad) \to H_*^{BM}(\quad) \otimes R$$
is a unique natural transformation satisfying the normalization condition that for a smooth variety $X$ the homomorphism $\ga_{c\ell}: \op{Iso}^{\Cal Prop}(Sm/X) \to H_*^{BM}(X) \otimes R$ satisfies that
$$\ga_{c\ell}([ X   \xrightarrow {\op {id}_X}  X]):= c\ell(TX) \cap [X].$$
A na\" \i ve question is whether $\ga_{c\ell}$ can be pushed down to the relative Grothendieck group $K_0(\Cal V / X)$ , i.e., for some natural transformation $?: K_0(\Cal V / X) \to H_*^{BM}(\quad) \otimes R$ so that the following diagram commutes:

$$
\xymatrix{
& \op{Iso}^{\Cal Prop}(Sm/X)  \ar [dl]_{\Theta^{sm}} \ar [dr]^{\ga_{c\ell}} \\
{K_0(\m V/X)} \ar [rr] _{?}& &  H_*^{BM}(X)\otimes R \,}
$$

If we require that $c\ell$ is a multiplicative characteristic class, the above normalization condition and another extra condition that the degree of the $0$-dimensional component of the class $\ga_{c\ell}(\mathbb {CP}^n)$ equals $1 - y + y^2 + \cdots (-y)^n$, then the characteristic class $c\ell$ can be identified as the Hirzebruch class. Namely, let $\alp _i$'s be the Chern roots of  a complex vector bundle $E$ over $X$. Then 
$$td(E) = \prod _{i=1}^{\op {rank} E} \frac{\alp _i}{1-e^{-\alp _i}} \in H^{2*}(X;\bQ)$$
is the \emph {Todd class} of $E$, and its modified version of it
$$td_{(y)}(V) := \prod _{i=1}^{\op {rank} E} \left (\frac {\alp _i(1+y)}{1-e^{-\alp _i(1+y)}} - \alp _i y \right ) \in H^*(X) \otimes \bQ[y]$$\\
is called the \emph {Hirzebruch class} (see \cite{Hirzebruch} and \cite{HBJ}. In fact, the Hirzebruch class unifies Chern, Todd  and Thom--Hirzebruch classes:
\begin{enumerate}
\item 
\underline {$y = -1$}: $ td_{(-1)}(E) = c(E)$ \, Chern class, 
\item 
\underline {$y = 0$}: $td_{(0)}(E) = td(E)$ \, Todd class, 
\item 
\underline {$y = 1$}: $td_{(1)}(E)  = L(E)$ \, Thom--Hirzebruch  $L$-class.\\
\end{enumerate}

Our previous paper \cite{BSY1} (also see \cite{BSY2} and \cite{Schuermann-Yokura}) showed the following theorem 
(originally using Saito's theory of mixed Hodge modules \cite{Saito}):

\begin{thm} (Motivic Hirzebruch class of singular varieties)\label{motivic-Hirzebruch}
There exists a unique natural transformation
$${T_y}_* : K_0(\m V/\quad)  \to H_*^{BM}(\quad) \otimes \bQ[y]$$
satisfying the normalization condition that for a smooth variety $X$
$${T_y}_* ([X  \xrightarrow{\op {id}_X}  X]) = td_{(y)}(TX) \cap [X].$$
\end{thm}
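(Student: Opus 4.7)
The plan is to use Bittner's presentation of the relative Grothendieck group to reduce the existence and uniqueness of $T_{y*}$ to a single blow-up identity for the Hirzebruch class on smooth varieties. Uniqueness is the easy half: because $\Theta^{sm}:\op{Iso}^{\Cal Prop}(Sm/X)\to K_0(\m V/X)$ is surjective by Hironaka, any candidate $T_{y*}$ must satisfy
$$T_{y*}([V\xrightarrow{h}X]) = h_*\,T_{y*}([V\xrightarrow{\op{id}_V}V]) = h_*\bigl(td_{(y)}(TV)\cap [V]\bigr)$$
for every proper $h:V\to X$ with $V$ smooth, where the first equality uses naturality with respect to the proper pushforward $h_*$ and the second uses the normalization condition. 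Since such classes generate $K_0(\m V/X)$, the transformation is forced.

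For existence, I would first define a map $\widetilde T_{y*}:\op{Iso}^{\Cal Prop}(Sm/X)\to H_*^{BM}(X)\otimes \bQ[y]$ by exactly the formula $\widetilde T_{y*}([V\xrightarrow h X]):=h_*(td_{(y)}(TV)\cap [V])$. This is the transformation $\ga_{td_{(y)}}$ already constructed in the excerpt, so naturality in $X$ (with respect to proper pushforward) is automatic. The task is then to show that $\widetilde T_{y*}$ factors through $\Theta^{sm}$, that is, kills the kernel of $\Theta^{sm}$. By Bittner's theorem (cited in the Remark above) this kernel is generated by the blow-up relations: for any smooth $V$ proper over $X$ and any smooth closed subvariety $Z\subset V$ with blow-up $\pi:\widetilde V\to V$ along $Z$ and exceptional divisor $E\subset\widetilde V$, one must verify
$$[V\xrightarrow h X]-[Z\xrightarrow{h|_Z}X]=[\widetilde V\xrightarrow{h\pi}X]-[E\xrightarrow{h\pi|_E}X].$$
After applying $h_*$ this reduces to proving, in $H_*^{BM}(V)\otimes\bQ[y]$,
$$td_{(y)}(TV)\cap [V]-\pi_*\bigl(td_{(y)}(T\widetilde V)\cap [\widetilde V]\bigr)=i_{Z*}\bigl(td_{(y)}(TZ)\cap [Z]\bigr)-\pi_*i_{E*}\bigl(td_{(y)}(TE)\cap [E]\bigr),$$
where $i_Z$ and $i_E$ are the obvious closed embeddings.

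The main obstacle is this blow-up identity, which is the honest content of the theorem. I would approach it in one of two standard ways. The first is purely algebro-topological: use the description of $\widetilde V$ as a projective bundle modification along $Z$, compute $T\widetilde V$ via the exact sequence $0\to T\widetilde V\to \pi^*TV\oplus T_{\widetilde V/V}\to (\text{correction})\to 0$ relating it to $\pi^*TV$ and to $TE$ together with the excess normal bundle on $E$, and then expand the multiplicative class $td_{(y)}$ using the projective bundle formula and Grothendieck--Riemann--Roch-type residue calculations. The identity then becomes a polynomial identity in Chern roots that one checks $y$ by $y$; specialization at $y=-1,0,1$ recovers the already-known blow-up behaviour of $c$, $td$ and $L$, which is a useful sanity check. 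The second, shorter route is to lift everything to the derived category of mixed Hodge modules: the assignment $[V\xrightarrow h X]\mapsto h_!\bQ_V^H$ from $K_0(\m V/X)$ to $K_0(MHM(X))$ is well-defined by the additivity triangle, and composing with Saito's motivic Chern transformation followed by the Baum--Fulton--MacPherson Todd transformation $td_*$ yields a transformation whose value on $[V\xrightarrow{\op{id}}V]$ for $V$ smooth is exactly $td_{(y)}(TV)\cap [V]$; then uniqueness from the first paragraph identifies it with $T_{y*}$, and in particular shows that the blow-up identity above holds. Either route establishes existence, completing the proof.
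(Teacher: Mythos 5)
First, note that the paper does not actually prove Theorem \ref{motivic-Hirzebruch}: it is imported from \cite{BSY1} (see also \cite{BSY2, Schuermann-Yokura}), with only the parenthetical remark that the original argument used Saito's mixed Hodge modules. Your two proposed routes are precisely the two proofs carried out in \cite{BSY1}, and your uniqueness argument (surjectivity of $\Theta^{sm}$ via Hironaka, plus naturality and normalization) is correct and is exactly the standard one. So as a strategy your proposal is on target; the issues are in the execution of existence.

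For your first route, you correctly reduce existence to the blow-up identity for $td_{(y)}$ via Bittner's presentation, but you leave that identity unproved, and it is the entire content of the theorem; ``expand $td_{(y)}$ by the projective bundle formula and GRR-type residue calculations'' is not yet an argument. In \cite{BSY1} this is not done directly in homology: one first constructs the motivic Chern class $mC_*: K_0(\m V/X) \to G_0(X)\otimes \bZ[y]$, where the blow-up relation reduces to known facts about the sheaves $\Omega^p$ under blow-ups along smooth centers, and only then pushes forward to homology. For your second route there is a concrete gap in the normalization check: for smooth $V$, composing Saito's motivic Chern transformation with the Baum--Fulton--MacPherson $td_*$ yields $ch(\la_y(T^*V))\, td(TV) \cap [V]$, which is \emph{not} $td_{(y)}(TV)\cap [V]$. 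To get $td_{(y)}$ one must further multiply the degree-$2i$ homology component by $(1+y)^{-i}$ (the twisted Todd transformation $td_{(1+y)*}$), which a priori lands in $H_*^{BM}(X)\otimes \bQ[y,(1+y)^{-1}]$; a separate argument is then needed to show the values actually lie in $H_*^{BM}(X)\otimes \bQ[y]$, as claimed in the statement. With these two points repaired, either route completes the proof.
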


This motivic Hirzebruch class ${T_y}_* : K_0(\m V/\quad)  \to H_*^{BM}(\quad) \otimes \bQ[y]$ in a sense ``unifies the following three well-known characteristic classes of singular varieties:

\begin{thm}(A ``unification" of three characteristic classes)\label{unification}
\begin{enumerate}
\item \underline {$c$ = Chern class}:  There exists a unique natural transformation 
$$\ga_F: K_0(\Cal V/\quad) \to F(\quad)$$
 such that for $X$ nonsingular $\ga_F([X \xrightarrow {\op {id}} X]) = \jeden_X.$ And the following diagram commutes
$$\xymatrix{
& K_0(\Cal V/X)  \ar [dl]_{\ga_F} \ar [dr]^{{T_{-1}}_*} \\
{F(X) } \ar [rr] _{c_*}& &  H_*^{BM}(X)\otimes \bQ.}
$$
Here $c_*:F(X) \to H_*^{BM}(X)$ is  the rationalized MacPherson's Chern class transformation \cite{MacPherson1}.
\item \underline {$td$ = Todd class}: There exists a unique natural transformation 
$$\ga_{G_0}: K_0(\Cal V/\quad) \to G_0(\quad)$$
 such that for $X$ nonsingular $\ga([X \xrightarrow {\op {id}} X]) = [\Cal O_X].$ And the following diagram commutes
$$\xymatrix{
&  K_0(\Cal V/X)  \ar [dl]_{\ga_{G_0}} \ar [dr]^{{T_{0}}_*} \\
{G_0(X) } \ar [rr] _{td_*}& &  H_*^{BM}(X)\otimes \bQ.}
$$
Here $td_*:G_0(X) \to H_*^{BM}(X)\otimes \bQ$ is Baum--Fulton--MacPherson's Todd class (or Riemann--Roch) transfomration \cite{BFM}.
\item  \underline {$L$ = Thom-Hirzebruch $L$-class}:There exists a unique natural transformation 
$$\ga_{\Omega} :K_0(\Cal V/\quad) \to \Omega (\quad)$$
such that for $X$ nonsingular $\omega([X \xrightarrow {\op {id}} X]) = \left [\bQ_X[\op {dim}X] \right ].$ And the following diagram commutes
$$\xymatrix{
& K_0(\Cal V/X)  \ar [dl]_{\ga_{\Omega}} \ar [dr]^{{T_{1}}_*} \\
{\Omega(X) } \ar [rr] _{L_*}& &  H_*^{BM}(X)\otimes \bQ.}
$$
Here $\Omega(X)$ is the Cappell--Shaneson--Youssin's cobordism group of self-dual constructible sheaves (see \cite{CS} and \cite {You}) and  $L_*:\Omega(X) \to H_*^{BM}(X)\otimes \bQ$ is Cappell--Shaneson's homology L-class transformation \cite{CS} (also see \cite{GM}).
\end{enumerate}
\end{thm}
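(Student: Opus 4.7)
The plan is to treat the three specializations in parallel: for each $y \in \{-1, 0, 1\}$, define the desired natural transformation $\ga$ by pushing forward the canonical class on a smooth source (the constructible function $\jeden_V$, the structure sheaf class $[\m O_V]$, or the shifted constant sheaf $[\bQ_V[\op{dim}V]]$), descend it to $K_0(\m V/X)$ using either the bare scissor relation or Bittner's smooth presentation, and finally check the diagram on smooth generators, where classical formulas reduce everything to ${T_y}_*$.

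For the Chern case, define $\ga_F([V \xrightarrow{h} X]) := h_*\jeden_V$ directly on the free group $\op{Iso}^{\m Prop}(\m V/X)$ for arbitrary, possibly singular, $V$. The scissor relation descends because proper pushforward of constructible functions is additive and $\jeden_V = \jeden_Z + \jeden_{V\setminus Z}$ in $F(V)$. By naturality of MacPherson's $c_*$, one has $c_* \ga_F([V \xrightarrow{h} X]) = h_* c_*(\jeden_V)$, which for smooth $V$ equals $h_*(c(TV) \cap [V]) = h_*(td_{(-1)}(TV) \cap [V]) = {T_{-1}}_*([V \xrightarrow{h} X])$. Since $\Theta^{sm}$ is surjective, the two natural transformations agree on all of $K_0(\m V/X)$; uniqueness of $\ga_F$ follows from the same surjectivity together with the normalization condition.

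The Todd and $L$-class cases follow the same outline but must be built on Bittner's smooth presentation, since naive pushforward along an open immersion is not available in $G_0$ or $\Omega$. Set $\ga_{G_0}([V \xrightarrow{h} X]) := h_*[\m O_V]$ and $\ga_{\Omega}([V \xrightarrow{h} X]) := h_*[\bQ_V[\op{dim}V]]$ on smooth proper arrows. The main technical obstacle is verifying Bittner's blow-up relation $[V] - [Z] = [\widetilde{V}] - [E]$ for a blow-up $\pi: \widetilde{V} \to V$ along a smooth center $Z$ with exceptional divisor $E$: for $\ga_{G_0}$ this reduces to the standard identities $R\pi_*\m O_{\widetilde{V}} = \m O_V$ and $R(\pi|_E)_*\m O_E = \m O_Z$ coming from the projective bundle structure of $\pi|_E$; for $\ga_{\Omega}$ it follows from the Beilinson--Bernstein--Deligne decomposition theorem applied to $\pi$, which exhibits $\bQ_V[\op{dim}V]$ as a summand of $R\pi_* \bQ_{\widetilde{V}}[\op{dim}V]$ with complementary terms expressible through $\bQ_E[\op{dim}E]$ and $\bQ_Z[\op{dim}Z]$ via the projective bundle cohomology of $E \to Z$.

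Once these relations are checked, $\ga_{G_0}$ and $\ga_{\Omega}$ exist and are unique by the same surjectivity argument as in the Chern case. Commutativity of the diagrams then reduces on a smooth generator $[V \xrightarrow{h} X]$ to the Hirzebruch--Riemann--Roch identity $td_*[\m O_V] = td(TV) \cap [V] = td_{(0)}(TV) \cap [V]$, and to Cappell--Shaneson's identity $L_*[\bQ_V[\op{dim}V]] = L(TV) \cap [V] = td_{(1)}(TV) \cap [V]$, combined with the naturality of $td_*$ and $L_*$ under proper pushforward. The hardest single step is the blow-up relation for $\ga_{\Omega}$, since it is where the self-dual structure underlying $\Omega$ and the $L$-class transformation genuinely enters.
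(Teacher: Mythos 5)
The paper does not actually prove Theorem \ref{unification}; it is recalled from \cite{BSY1, BSY2}, and your proposal reconstructs essentially the argument given there: direct descent of $h_*\jeden_V$ through the additivity relation for the constructible-function case, and Bittner's blow-up presentation for $G_0$ and $\Omega$ (with $R\pi_*\m O_{\widetilde{V}}=\m O_V$ and the projective-bundle/decomposition computations verifying the blow-up relation), followed by comparison of both sides of each triangle on smooth generators via surjectivity of $\Theta^{sm}$. This matches the intended proof and I see no gap, beyond the expected care needed in tracking the self-dual structure in the $\Omega$ case, which you correctly flag as the delicate step.
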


We also have the following
\begin{cor} The following diagram commutes:
$$
\xymatrix{
& \op{Iso}^{\Cal Prop}(Sm/X)  \ar [dl]_{\Theta^{sm}} \ar [dr]^{\ga_{td_{(y)}}} \\
{K_0(\m V/X)} \ar [rr] _{{T_y}_*}& &  H_*^{BM}(X)\otimes R \,}
$$
\end{cor}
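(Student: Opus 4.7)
The plan is to chase an arbitrary generator $[V \xrightarrow{h} X]$ of $\op{Iso}^{\Cal Prop}(Sm/X)$ around the diagram and reduce the claim to two properties of ${T_y}_*$ already available from Theorem \ref{motivic-Hirzebruch}: naturality under proper pushforward and the normalization on smooth varieties. Since $\ga_{td_{(y)}}$ is defined on generators by $\ga_{td_{(y)}}([V \xrightarrow{h} X]) = h_*(td_{(y)}(TV) \cap [V])$, and $\Theta^{sm}$ is the obvious surjection to $K_0(\m V/X)$, it suffices to verify the equality
\[
{T_y}_*\bigl([V \xrightarrow{h} X]\bigr) \;=\; h_*\bigl(td_{(y)}(TV) \cap [V]\bigr)
\]
in $H_*^{BM}(X) \otimes \bQ[y]$ for every smooth $V$ and proper $h : V \to X$.

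First I would observe that in $K_0(\m V/X)$ one can rewrite the class as a pushforward:
\[
[V \xrightarrow{h} X] \;=\; h_*\bigl([V \xrightarrow{\op{id}_V} V]\bigr),
\]
where $h_* : K_0(\m V/V) \to K_0(\m V/X)$ is the functorial pushforward of relative Grothendieck groups induced by composition with $h$ (this is well-defined because $h$ is proper, so it preserves the additivity relation defining the $K_0$ groups). Then I would apply naturality of the transformation ${T_y}_*$ from Theorem \ref{motivic-Hirzebruch} with respect to $h_*$ to get
\[
{T_y}_*\bigl([V \xrightarrow{h} X]\bigr) \;=\; h_*\bigl({T_y}_*([V \xrightarrow{\op{id}_V} V])\bigr).
\]
Finally, since $V$ is smooth, the normalization condition in Theorem \ref{motivic-Hirzebruch} yields
\[
{T_y}_*\bigl([V \xrightarrow{\op{id}_V} V]\bigr) \;=\; td_{(y)}(TV) \cap [V],
\]
and substituting this into the previous display gives precisely $\ga_{td_{(y)}}([V \xrightarrow{h} X])$.

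There is essentially no obstacle: the statement is a formal consequence of the two axiomatic properties of ${T_y}_*$. The only minor point worth spelling out is that $\Theta^{sm}$ is surjective (as noted in the Remark via Hironaka), so the commutativity on generators $[V \xrightarrow{h} X]$ with $V$ smooth suffices to establish the commutativity of the triangle on all of $\op{Iso}^{\Cal Prop}(Sm/X)$. In particular this corollary exhibits $\ga_{td_{(y)}}$ as a lift, through $\Theta^{sm}$, of the motivic Hirzebruch class, which is exactly the compatibility needed in the subsequent sections.
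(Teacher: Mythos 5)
Your proof is correct and is precisely the argument the paper leaves implicit (the corollary is stated there without proof): check on generators, rewrite $[V \xrightarrow{h} X]$ as $h_*\bigl([V \xrightarrow{\op{id}_V} V]\bigr)$ in $K_0(\m V/X)$, and conclude by the naturality and smooth normalization of ${T_y}_*$ from Theorem \ref{motivic-Hirzebruch}. One small correction to your closing remark: the reduction to generators is justified because $\op{Iso}^{\Cal Prop}(Sm/X)$ is free abelian on the classes $[V \xrightarrow{h} X]$ and both composites around the triangle are group homomorphisms out of it; the surjectivity of $\Theta^{sm}$ is irrelevant for this (it would matter only if you were trying to descend a map to $K_0(\m V/X)$, which is not what the corollary asks).
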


\begin{defn} For a complex algebraic variety $X$
$${T_y}_*(X) := {T_y}_*([X \xrightarrow {\op {id}} X]) \in H_*^{BM}(X) \otimes \bQ[y]$$
is called \emph{the motivic Hirzebruch class of $X$}.
\end{defn}

\begin{rem} \label{DuBois} As to the homomorphism $\ga_F: K_0(\Cal V/X) \to F(X)$ we have that for \emph{any} variety $X$
$$\ga_F( [X \xrightarrow {\op {id}} X]) = \jeden_X, \text {therefore} \quad {T_{-1}}_*(X) = c_*(X),$$
whether $X$ is singular or non-singular.  However, as to the other two homomorphisms $\ga_{G_0}: K_0(\Cal V/X) \to G_0(X)$ and  $\ga_{\Omega} :K_0(\Cal V/X) \to \Omega (X)$,  if $X$ is singular, in general we have that  
$$\ga_{G_0}( [X \xrightarrow {\op {id}} X]) \not = [\m O_X], \quad \ga_{\Omega}( [X \xrightarrow {\op {id}} X]) \not = [\m {IC}_X],$$
where $\m {IC}_X$ is the middle intersection homology complex of Goresky--MacPherson \cite{GM}.
Hence,  if $X$ is singular, in general we have that 
$${T_{0}}_*(X) \not = td_*(X), \quad {T_{1}}_*(X) \not = L_*(X).$$
If $X$ is a Du Bois variety, i.e., a variety with Du Bois singularities, then we have that 
$$\ga_{G_0}( [X \xrightarrow {\op {id}} X]) = [\m O_X], \text {therefore} \quad {T_{0}}_*(X) = td_*(X).$$
  If $X$ is a rational homology manifold, then conjecturally 
$$\ga_{\Omega}( [X \xrightarrow {\op {id}} X]) = [\m {IC}_X], \text {therefore} \quad {T_{1}}_*(X) = L_*(X).$$ For more details, see \cite{BSY1}.
\end{rem}

\section {The Grothendieck group $K^{\Cal Prop} _{\ell.c.i}(\m V/X \xrightarrow{h}  S)$}

Let $S$ be a complex algebraic variety and fixed. Let $\m V_S$ be the category of $S$-varieties, i.e., an object  is a morphism $h: X \to S$ and a morphism from $h: X \to S$ to $k: Y \to S$  is a morphism $f:X \to Y$ such that the following diagram commutes:
$$\xymatrix{
X \ar[dr]_ {h}\ar[rr]^ {f} && Y \ar[dl]^{k }\\
& S}.$$

\begin{defn}
Let $M^{\Cal Prop} _{\ell.c.i}(\m V/X \xrightarrow{h}  S)$ be the monoid consisting of isomorphism classes $[V \xrightarrow{p}  X]$ of \underline {proper} morphisms $p: V \to X$ such that the composite $h \circ p: V \to S$ becomes a \underline {local complete intersection} morphism, with the addition $(+)$ and zero $(0)$ defined by
\begin{itemize}
\item $[V  \xrightarrow{h}  X] + [V'  \xrightarrow{h'}  X] := [V \sqcup V' \xrightarrow{h + h'} X]$,
\item $0 := [\phi \to X]. $
\end{itemize}
Then we define
$$K^{\Cal Prop} _{\ell.c.i}(\m V/X \xrightarrow{h}  S)$$
to  be the Grothendieck group of the monoid $M^{\Cal Prop} _{\ell.c.i}(\m V/X \xrightarrow{h}  S)$.\\
 \end{defn}
 
\begin{lem} 
\begin{enumerate}
\item The Grothendieck group $K^{\Cal Prop} _{\ell.c.i}(\m V/X \xrightarrow{h}  S)$ is a covariant functor with pushforwards for proper morphisms, i.e., for a proper morhism $f: X \to Y \in \m V_S$ 
$$\xymatrix{
X \ar[dr]_ {h}\ar[rr]^ {f} && Y \ar[dl]^{k }\\
& S}$$
the pushforward 
$$f_*: K^{\Cal Prop} _{\ell.c.i}(\m V/X \xrightarrow{h}  S) \to  K^{\Cal Prop} _{\ell.c.i}(\m V/Y \xrightarrow{k}  S) $$defined by
$$f_*([V  \xrightarrow{p}  X]) := [V  \xrightarrow{f \circ p}  Y]$$
is covariantly functorial.
\item The Grothendieck group $K^{\Cal Prop} _{\ell.c.i}(\m V/X \xrightarrow{h}  S)$ is a contravariant functor with pullbacks for smooth morphisms, i.e., for a smooth morhism $f: X \to Y \in \m V_S$ 
the pullback
$$f^*: K^{\Cal Prop} _{\ell.c.i}(\m V/Y \xrightarrow{k}  S) \to  K^{\Cal Prop} _{\ell.c.i}(\m V/X \xrightarrow{h}  S) $$defined by
$$f^*([W  \xrightarrow{p}  Y]) := [W'  \xrightarrow{p'}  X]$$
is contravariantly functorial. Here we consider the following commutative diagrams whose top square is a fiber square:
\begin{equation}\label{diagram1}
\xymatrix{
W' \ar[rr]^{f'}\ar[d]_{p'} && W \ar[d]^{p}\\
X \ar[dr]_ {h}\ar[rr]^ {f} && Y \ar[dl]^{k }\\
& S.}
\end{equation}

\end{enumerate}
\end{lem}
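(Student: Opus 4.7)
The proof amounts to checking that the operations are well-defined on the monoid $M^{\Cal Prop}_{\ell.c.i}(\m V/X \xrightarrow{h} S)$ (i.e., they preserve the defining conditions), that they are monoid homomorphisms (so that they extend by the universal property to the Grothendieck groups), and that functoriality on the level of the monoid follows from associativity of composition and universality of fiber products.

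For part (1), I would start by showing that $f_*([V \xrightarrow{p} X]) := [V \xrightarrow{f \circ p} Y]$ lands in $M^{\Cal Prop}_{\ell.c.i}(\m V/Y \xrightarrow{k} S)$: the morphism $f \circ p$ is proper since properness is stable under composition, and the composite $k \circ (f \circ p) = (k \circ f) \circ p = h \circ p$ is lci by hypothesis on $[V \xrightarrow{p} X]$. Additivity of $f_*$ is immediate since $f_*([V \sqcup V' \to X]) = [V \sqcup V' \xrightarrow{f \circ (p + p')} Y] = f_*([V \to X]) + f_*([V' \to X])$, and $f_*(0) = 0$. Thus $f_*$ is a monoid homomorphism, hence extends uniquely to the group completion. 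Functoriality $(f \circ g)_* = f_* \circ g_*$ and $(\op{id})_* = \op{id}$ are immediate from the definition.

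For part (2), given a smooth morphism $f: X \to Y$ in $\m V_S$ and $[W \xrightarrow{p} Y]$ with $k \circ p$ lci, I would first form the fiber square as in diagram (\ref{diagram1}) to obtain $p': W' \to X$ and $f': W' \to W$. The morphism $p'$ is proper because properness is stable under base change. For the lci condition on $h \circ p'$, I use the commutativity of the diagram to rewrite
$$h \circ p' \;=\; k \circ f \circ p' \;=\; k \circ p \circ f'.$$
Since $f$ is smooth, its base change $f'$ is smooth and therefore lci; combined with the fact that $k \circ p$ is lci and that lci morphisms are closed under composition, the composite $(k \circ p) \circ f'$ is lci. Hence $[W' \xrightarrow{p'} X] \in M^{\Cal Prop}_{\ell.c.i}(\m V/X \xrightarrow{h} S)$. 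Additivity is clear because pullback commutes with disjoint unions, so $f^*$ is a monoid homomorphism and extends to the Grothendieck group. Contravariant functoriality $(f \circ g)^* = g^* \circ f^*$ follows from the fact that the composition of two fiber squares is a fiber square, up to canonical isomorphism.

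The only non-formal step is the lci check in part (2), which is the potential obstacle: it relies on the standard but non-trivial facts from intersection theory that smoothness is preserved under base change, that smooth morphisms are lci, and that lci morphisms are closed under composition. All three are classical (see, e.g., \cite[\S 6.6 and B.7.6]{Fulton-book}), so no new argument is required, but they must be invoked explicitly to justify that the naive set-theoretic pullback defines a morphism of monoids in the required category.
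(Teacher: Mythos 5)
Your proof is correct; the paper states this lemma without proof, so your argument fills that gap rather than diverging from anything. The only non-formal point, as you identify, is the lci check in part (2), and your reasoning --- rewriting $h \circ p' = k \circ p \circ f'$ and using that the base change $f'$ of the smooth morphism $f$ is smooth, hence lci, together with closure of lci morphisms under composition --- is exactly the reasoning the author signals in the remark immediately following the lemma, where it is explained that this is precisely why smooth (rather than lci) morphisms must be used for the pullback.
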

\begin{rem} 

\begin{enumerate}

\item As to the contravariance of the Grothendieck group $K^{\Cal Prop} _{\ell.c.i}(\m V/X \xrightarrow{h}  S)$, one might be tempted to consider the pullback for a local complete intersection morphism $f:X \to Y$ instead of a smooth morphism. But a crucial problem for this is that the pullback of a local complete intersection morphism is not necessarily a local complete intersection morphism, thus in the above diagram (\ref{diagram1}) $f': W' \to W$ is not necessarily a local complete intersection morphism and hence we do not know whether or not the composite $k \circ p \circ f' = h \circ p'$ is a local complete intersection morphism.
\item If we consider the finer class $\m Sm$ of smooth morphisms instead of the class $\m L.c.i$ of local complete intersection morphisms, we do have a bivariant theory, from which we can construct a motivic bivariant characteristic class \cite{Yokura-mbc}.
\end{enumerate}
\end{rem}

\section{Motivic Hirzebruch-Milnor classes}

For a morphism $f:X \to Y$, $\bH(X \to Y)$ is the Fulton--MacPherson bivariant homology theory \cite{Fulton-MacPherson}. Since the main theme of the present paper is not a bivariant theoretic, we do not recall a general bivariant theory, thus see \cite{Fulton-MacPherson} for details. In the paper $\bullet$ denotes the bivariant product, i.e., for morphisms $f: X \to Y$, $g:Y \to Z$ the bivariant product $\bullet$ is 
$$\bullet: \bH(X \xrightarrow{f} Y) \times \bH(Y \xrightarrow{g} Z) \to \bH(X \xrightarrow{g\circ f} Z).$$
Then  $\bH(X  \xrightarrow{\op {id}_X}  X)$ is the usual cohomology theory $H^*(X)$ and $\bH(X \to pt)$ (for a mapping to a point) is the Borel--Moore homology theory $H_*^{BM}(X)$.

\begin{pro}Let $c\ell:K^0 \to H^*(\quad)\otimes R$ be a characteristic class of complex vector bundles with a suitable coefficients $R$. Then on the category $\m V_S$ we have that 
\begin{enumerate}
\item  There exists a unique natural transformation (not a Grotendieck transformation) 
$$\widetilde {{\ga_{c\ell}}_*} : K^{\Cal Prop}  _{\ell.c.i}(\m V/X \xrightarrow{h}  S) \to \bH(X \xrightarrow{h}  S) \otimes R$$
such that for a local complete intersection morphism $h: X \to S$
$$\widetilde {{\ga_{c\ell}}_*} ([X  \xrightarrow{\op {id}_X}  X]) = c\ell(T_h) \bullet U_h.$$
Here $T_h$ is the relative tangent bundle of $h$ and $U_h \in \bH(X \xrightarrow{h}  S)$ is the canonical orientation.
\item There exists a unique natural transformation 
$${\ga_{c\ell}}_* : K^{\Cal Prop}  _{\ell.c.i}(\m V/X \xrightarrow{h}  S) \to H_*^{BM}(X) \otimes R$$
such that for a local complete intersection morphism $h: X \to S$
$${\ga_{c\ell}}_*([X  \xrightarrow{\op {id}_X}  X]) = c\ell(T_h) \cap [X].$$
\end{enumerate}
\end{pro}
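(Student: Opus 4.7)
\medskip

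The plan is to give an explicit formula on generators and then verify that it descends to the Grothendieck group, satisfies the stated normalization, is natural, and is forced by uniqueness. For part (1), define
\[
\widetilde{{\ga_{c\ell}}_*}\bigl([V\xrightarrow{p}X]\bigr) \;:=\; p_{*}\bigl(c\ell(T_{h\circ p})\bullet U_{h\circ p}\bigr),
\]
where $T_{h\circ p}$ is the virtual tangent bundle of the l.c.i. morphism $h\circ p\colon V\to S$, $U_{h\circ p}\in\bH(V\xrightarrow{h\circ p}S)$ is the canonical orientation supplied by the l.c.i. assumption, and $p_{*}\colon\bH(V\xrightarrow{h\circ p}S)\to\bH(X\xrightarrow{h}S)$ is the bivariant proper pushforward. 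This assignment is additive on disjoint unions and sends the zero $[\phi\to X]$ to $0$, so by the universal property of the Grothendieck group it extends to a well-defined homomorphism on $K^{\Cal Prop}_{\ell.c.i}(\m V/X\xrightarrow{h}S)$. The normalization condition at $p=\op{id}_X$ is immediate from $p_{*}=\op{id}$.

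Naturality with respect to a proper morphism $f\colon X\to Y$ in $\m V_S$ (with $h=k\circ f$) reduces to the functoriality $f_{*}\circ p_{*}=(f\circ p)_{*}$ of bivariant pushforward together with the tautology $T_{h\circ p}=T_{k\circ(f\circ p)}$ and $U_{h\circ p}=U_{k\circ(f\circ p)}$:
\[
f_{*}\widetilde{{\ga_{c\ell}}_*}\bigl([V\xrightarrow{p}X]\bigr)
=(f\circ p)_{*}\bigl(c\ell(T_{h\circ p})\bullet U_{h\circ p}\bigr)
=\widetilde{{\ga_{c\ell}}_*}\bigl([V\xrightarrow{f\circ p}Y]\bigr).
\]
Uniqueness is the conceptually key step, though it is short: every generator factors as $[V\xrightarrow{p}X]=p_{*}\bigl([V\xrightarrow{\op{id}_V}V]\bigr)$, where on the right the object $V$ is regarded as living over $S$ via the l.c.i. structure morphism $h\circ p$. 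Hence any natural transformation $\Psi$ satisfying the normalization is forced to agree with $\widetilde{{\ga_{c\ell}}_*}$:
\[
\Psi\bigl([V\xrightarrow{p}X]\bigr)=p_{*}\Psi\bigl([V\xrightarrow{\op{id}_V}V]\bigr)
=p_{*}\bigl(c\ell(T_{h\circ p})\bullet U_{h\circ p}\bigr).
\]

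Part (2) is then obtained by repeating the template with the cap product in place of the bivariant product: set
\[
{\ga_{c\ell}}_{*}\bigl([V\xrightarrow{p}X]\bigr):=p_{*}\bigl(c\ell(T_{h\circ p})\cap[V]\bigr)\in H_*^{BM}(X)\otimes R,
\]
using the ordinary fundamental class $[V]\in H_*^{BM}(V)$ (which exists for any complex algebraic variety, even when $V$ is singular). The verification of well-definedness, the normalization $c\ell(T_h)\cap[X]$ at $\op{id}_X$, naturality under proper $f$ (via $f_{*}p_{*}=(f\circ p)_{*}$), and uniqueness (via the same reduction to tautological elements) is formally identical to part (1). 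Since nothing beyond the Fulton--MacPherson axioms for bivariant pushforward, the standard behavior of $T_{-}$ and $U_{-}$ under composition of l.c.i. morphisms, and the universal property of the Grothendieck group is invoked, there is no substantive obstacle; the only real point to be careful about is that the element $[V\xrightarrow{\op{id}_V}V]$ be taken in the correct Grothendieck group, namely the one associated with the composite structure morphism $h\circ p\colon V\to S$.
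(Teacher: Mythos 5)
Your proposal is correct and follows essentially the same route as the paper: both define the transformation on generators by $[V\xrightarrow{p}X]\mapsto p_{*}\bigl(c\ell(T_{h\circ p})\bullet U_{h\circ p}\bigr)$ (resp.\ with $\cap\,[V]$ for part (2)), verify naturality via $f_{*}p_{*}=(f\circ p)_{*}$ together with $T_{h\circ p}=T_{k\circ f\circ p}$ and $U_{h\circ p}=U_{k\circ f\circ p}$, and deduce uniqueness from the normalization. Your write-up is if anything slightly more explicit than the paper's on the uniqueness step (factoring each generator as $p_{*}[V\xrightarrow{\op{id}_V}V]$) and on descent to the Grothendieck group, but these are elaborations of the same argument, not a different one.
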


\begin{proof} 
(1) We define ${\ga_{c\ell}}_* : K^{\Cal Prop}  _{\ell.c.i}(\m V/X \xrightarrow{h}  S) \to \bH(X \xrightarrow{h}  S) \otimes R$ by 
$$\widetilde {{\ga_{c\ell}}_*} ([V  \xrightarrow{p}  X]) := p_*(c\ell(T_{h \circ p} \bullet U_{h \circ p}).$$
Then for a morphism $f:X \to Y$, i.e., for the following commutative diagram
$$\xymatrix{
X \ar[dr]_ {h}\ar[rr]^ {f} && Y \ar[dl]^{k }\\
& S}$$
the following diagram commutes:
$$\CD
K^{\Cal Prop}  _{\ell.c.i}(\m V/X \xrightarrow{h}  S) @> {\widetilde {{\ga_{c\ell}}_*} } >> \bH(X \xrightarrow{h}  S) \otimes R \\
@V f_*VV @VV f_* V\\
K^{\Cal Prop}  _{\ell.c.i}(\m V/Y \xrightarrow{k}  S)  @>> {\widetilde {{\ga_{c\ell}}_*} } > \bH(Y \xrightarrow{k}  S) \otimes R, \endCD
$$
Indeed, for $[V  \xrightarrow{p}  X] \in K^{\Cal Prop}  _{\ell.c.i}(\m V/X \xrightarrow{h}  S)$ we have that 
\begin{align*}
f_*\left (\widetilde {{\ga_{c\ell}}_*} ([V  \xrightarrow{p}  X])\right) & = f_* \left (p_*(c\ell(T_{h \circ p}) \bullet U_{h \circ p}) \right)\\
& = (f \circ p)_*\left(c\ell(T_{h \circ p}) \bullet U_{h \circ p}\right) \\
& = (f \circ p)_*\left(c\ell(T_{k \circ f \circ p}) \bullet U_{k \circ f  \circ p}\right) \\
& = (f \circ p)_*\left(c\ell(T_{k \circ f \circ p}) \bullet U_{k \circ f  \circ p}\right) \\
& = {\ga_{c\ell}}_*([V \xrightarrow{f \circ p} Y]) \\
& = \widetilde {{\ga_{c\ell}}_*}\left (f_*([V  \xrightarrow{p}  X]) \right). 
\end{align*}
Since, for a local complete intersection morphism $h: X \to S$, by definition of  ${\ga_{c\ell}}_*$ we have
${\ga_{c\ell}}_*([X  \xrightarrow{\op{id}_X}  X]) = c\ell(T_h) \bullet U_h$,
the uniqueness of ${\ga_{c\ell}}_*$ follows.

(2) We define ${\ga_{c\ell}}_* : K^{\Cal Prop}  _{\ell.c.i}(\m V/X \xrightarrow{h}  S) \to H_*^{BM}(X) \otimes R$ by 
$${\ga_{c\ell}}_*([V  \xrightarrow{p}  X]) := p_*(c\ell(T_{h \circ p} \cap [V]).$$
Then the following diagram commutes:
$$\CD
K^{\Cal Prop}  _{\ell.c.i}(\m V/X \xrightarrow{h}  S) @> {{\ga_{c\ell}}_*} >> H_*^{BM}(X) \otimes R \\
@V f_*VV @VV f_* V\\
K^{\Cal Prop}  _{\ell.c.i}(\m V/Y \xrightarrow{k}  S)  @>> {{\ga_{c\ell}}_*} > H_*^{BM}(Y) \otimes R, \endCD
$$
It follows from replacing $\bullet U_{h \circ p}$ and $\bullet U_{k \circ f \circ p}$  by $\cap [V]$ in the proof of (1).
\end{proof}

\begin{rem}
For a local complete intersection morphism $f: X \to S$, we have 
$$\bullet U_h \bullet [S] = \cap [X].$$ Here $[W]$ is the fundamental class of $W$ and $[W] \in \bH(W \to pt) = H_*^{BM}(W)$. Thus the relation between the above two natural transformations $\widetilde {{\ga_{c\ell}}_*}$ and ${\ga_{c\ell}}_*$ is that
$${\ga_{c\ell}}_* = \widetilde {{\ga_{c\ell}}_*} \bullet [S].$$
\end{rem}
\begin{rem}
When the fixed variety $S$ is a point, the above two natural transformations $\widetilde {{\ga_{c\ell}}_*}$ and ${\ga_{c\ell}}_*$ are the same: ${\ga_{c\ell}}_*: K^{\Cal Prop}  _{\ell.c.i}(\m V/X) \to H_*^{BM}(X) \otimes R.$
\end{rem}

If $S$ is a point and $c\ell = c$ the Chern class, then for a local complete intersection variety $X$ in a smooth manifold, we have that 
$${\ga_{c}}_*([X  \xrightarrow{\op{id}_X}  X]) = c(T_X) \cap [X]$$
which is Fulton--Johnson's class $c_*^{FJ}(X)$. Thus the above natural transformations  
$$\widetilde {{\ga_{c\ell}}_*} : K^{\Cal Prop}  _{\ell.c.i}(\m V/X \xrightarrow{h}  S) \to \bH(X \xrightarrow{h}  S) \otimes R$$
$${\ga_{c\ell}}_* : K^{\Cal Prop}  _{\ell.c.i}(\m V/X \xrightarrow{h}  S) \to H_*^{BM}(X) \otimes R$$
 are both generalizations of Fulton--Johnson's class as \underline {natural transformations}. They are respecively called \emph{a motivic ``bivariant" FJ-$c\ell$ class}, denoted by $\widetilde {c\ell_*^{FJ}}$, and \emph{a motivic FJ-$c\ell$ class}, denoted by $c\ell_*^{FJ}$, since it is modelled after Fulton--Johnson's class $c_*^{FJ}$.\\

From here on we consider the Hirzebruch class $td_{(y)}$, instead of an arbitrary characteristic class $c\ell$, because we use the motivic Hirzebruch class ${T_y}_*: K_0(\m V/X) \to H_*^{BM}(X) \otimes \bQ[y]$ below.  We use the above natural transformations 
$$\widetilde {{\ga_{td_{(y)}}}_*} : K^{\Cal Prop}  _{\ell.c.i}(\m V/X \xrightarrow{h}  S) \to \bH(X \xrightarrow{h}  S) \otimes \bQ[y],$$
$${\ga_{td_{(y)}}}_* : K^{\Cal Prop}  _{\ell.c.i}(\m V/X \xrightarrow{h}  S) \to H_*^{BM}(X) \otimes \bQ[y],$$
which are respectively called \emph{the motivic ``bivariant" FJ-Hirzebruch class} and  \emph{the motivic FJ-Hirzebruch class} and denoted by $\widetilde{{T_y}_*^{FJ}}$ and ${T_y}_*^{FJ}$.\\

We define the twisted pushforward for homology as follows: for a morphism $f:X \to Y$, the relative dimension of $f$ and the co-relative dimension of $f$ are respectively defined by 
$$\op {dim}(f) : = \op {dim}X - \op{dim}Y \quad \op{codim}(f ):= \op {dim}Y - \op{dim}X.$$
For the Borel--Moore homology theory $H_*$, the twisted pushforward for a proper morphism $f: X \to Y$ is define by 
$$f_{**} := (-1)^{\op{codim}(f)} f_*: H^{BM}_*(X) \to H^{BM}_*(Y).$$
With this twisted pushforward the Borel--Moore homology theory is still a covariant functor. To avoid a possible confusion we denote $H^{BM}_{**}(X)$ for the Borel--Moore homology theory with the twisted pushforward.

\begin{cor} On the category $\m V_S$ there exists a unique natural transformation
$$\m M {T_y}_*: K^{\Cal Prop} _{\ell.c.i}(\m V/X \xrightarrow{h}  S)  \to H_{**}^{BM}(X)\otimes \bQ[y]$$
such that for a local complete intersection morphism $h: X \to S$ the homomorphism $\m M {T_y}_*: K^{\Cal Prop} _{\ell.c.i}(\m V/X \xrightarrow{h}  S)  \to H_{**}^{BM}(X)\otimes \bQ[y]$ satisfies that 
$$\m M {T_y}_*([X  \xrightarrow{\op{id}_X}  X]) = (-1)^{\op{dim}X}\left ({T_y}_*^{FJ} - {T_y}_* \circ \Theta \right )([X  \xrightarrow{\op{id}_X} X]).$$
\end{cor}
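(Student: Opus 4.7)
\emph{Plan of proof.} Two natural transformations are already in hand: the motivic FJ--Hirzebruch class
\[
{T_y}_*^{FJ}\colon K^{\Cal Prop}_{\ell.c.i}(\m V/X\xrightarrow{h}S)\to H_*^{BM}(X)\otimes\bQ[y]
\]
supplied by the preceding proposition, and the composite ${T_y}_*\circ\Theta$, where
\[
\Theta\colon K^{\Cal Prop}_{\ell.c.i}(\m V/X\xrightarrow{h}S)\to K_0(\m V/X)
\]
is the natural forgetful homomorphism sending $[V\xrightarrow{p}X]$ to its image in the relative Grothendieck group, and ${T_y}_*$ is the motivic Hirzebruch class of Theorem \ref{motivic-Hirzebruch}. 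Each is natural with respect to the ordinary proper pushforward $f_*$; so is their difference. The plan is to insert the dimensional sign $(-1)^{\op{dim}X}$ so as to convert $f_*$-naturality into naturality against the twisted pushforward $f_{**}$, and then read off both existence and uniqueness.

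\emph{Existence.} I would define, on generators,
\[
\m M{T_y}_*\bigl([V\xrightarrow{p}X]\bigr):=(-1)^{\op{dim}X}\bigl({T_y}_*^{FJ}-{T_y}_*\circ\Theta\bigr)\bigl([V\xrightarrow{p}X]\bigr),
\]
which is additive on $M^{\Cal Prop}_{\ell.c.i}$ (the prefactor depends only on the target $X$), hence descends to $K^{\Cal Prop}_{\ell.c.i}(\m V/X\xrightarrow{h}S)$. The normalization condition is the specialization $V=X$, $p=\op{id}_X$. For naturality under a proper $f\colon X\to Y$ in $\m V_S$, the key identity
\[
(-1)^{\op{codim}(f)}\cdot(-1)^{\op{dim}X}=(-1)^{\op{dim}Y},
\]
combined with the $f_*$-naturality of ${T_y}_*^{FJ}$ and of ${T_y}_*\circ\Theta$, gives $f_{**}\circ\m M{T_y}_*=\m M{T_y}_*\circ f_*$ on classes, which is precisely naturality of $\m M{T_y}_*$ into $H_{**}^{BM}(-)\otimes\bQ[y]$.

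\emph{Uniqueness.} Any class $[V\xrightarrow{p}X]\in K^{\Cal Prop}_{\ell.c.i}(\m V/X\xrightarrow{h}S)$ is the image of $[V\xrightarrow{\op{id}_V}V]\in K^{\Cal Prop}_{\ell.c.i}(\m V/V\xrightarrow{h\circ p}S)$ under $p_*$, and since $h\circ p$ is l.c.i., the normalization condition fixes the value of any candidate transformation on that identity element. Naturality under $p_{**}$ then forces its value at $[V\xrightarrow{p}X]$, matching the formula above.

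\emph{Main obstacle.} The sole real content is sign bookkeeping: confirming that attaching $(-1)^{\op{dim}X}$ at the target---rather than at the source or at $V$---is exactly the twist that matches $(-1)^{\op{codim}(f)}$, which is what distinguishes $f_{**}$ from $f_*$. A secondary nuisance, harmless but worth noting, is that when $X$ is disconnected the symbol $\op{dim}X$ should be read component-wise so that additivity under disjoint unions is preserved; this is unambiguous because any l.c.i.\ morphism $h\colon X\to S$ is equidimensional over each component of $S$.
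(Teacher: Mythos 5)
Your proof is correct and takes essentially the same route as the paper: define $\m M{T_y}_*$ on generators as a signed difference of ${T_y}_*^{FJ}$ and ${T_y}_*\circ\Theta$, observe that the sign converts $f_*$-naturality into $f_{**}$-naturality, and derive uniqueness from the normalization plus naturality. The paper's own proof consists only of the two-line definition with no verification, so your naturality computation and your uniqueness argument (factor $[V\xrightarrow{p}X]$ as $p_*[V\xrightarrow{\op{id}_V}V]$, note that $h\circ p$ is l.c.i.\ so the normalization applies over $V$) supply exactly the details it leaves implicit; these are the same checks carried out for the preceding proposition. The one substantive divergence is the sign: the paper's displayed definition uses the prefactor $(-1)^{\op{dim}V}$, whereas you use $(-1)^{\op{dim}X}$. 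These disagree whenever $\op{dim}V\neq\op{dim}X$, and only your choice is actually natural with respect to the twisted pushforward $f_{**}=(-1)^{\op{codim}(f)}f_*$ into $H_{**}^{BM}$; the prefactor $(-1)^{\op{dim}V}$ would instead give naturality for the untwisted $f_*$. Your convention is, however, the one the statement requires, and it is also the one the paper implicitly adopts elsewhere: the proof of the corollary immediately rewrites the definition as $(-1)^{\op{dim}X}p_*\bigl(td_{(y)}(T_{h\circ p})\cap[V]-{T_y}_*(V)\bigr)$, and the later Todd--Milnor vanishing argument computes $\m M{T_0}_*([V\xrightarrow{p}X])=p_{**}\m M{T_0}_*([V\xrightarrow{\op{id}}V])$ --- both consistent with $(-1)^{\op{dim}X}$. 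So your proof not only fills in the omitted verification but also resolves a sign inconsistency in the paper in the direction compatible with the stated target $H_{**}^{BM}(X)\otimes\bQ[y]$.
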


\begin{proof} We define $\m M {T_y}_*: K^{\Cal Prop} _{\ell.c.i}(\m V/X \xrightarrow{h}  S)   \to H_{**}^{BM}(X)\otimes \bQ[y]$ by
$$\m M {T_y}_* ([V  \xrightarrow{p}  X]) :=  (-1)^{\op{dim}V} \left ({T_y}_*^{FJ}  - {T_y}_* \circ \Theta  \right )([V  \xrightarrow{p}  X]).$$
Which is equal to
$$(-)^{\op{dim}X} p_* \left (td_{(y)}(T_{p\circ h}) \cap [V] - {T_y}_*(V) \right ).$$
\end{proof}

From here on we denote ${T_y}_* \circ \Theta$ simply by ${T_y}_*$. The above motivic natural transformation 
$$\m M {T_y}_*: K^{\Cal Prop} _{\ell.c.i}(\m V/X)  \to H_{**}^{BM}(X)\otimes \bQ[y]$$
 shall be called \emph{a motivic Hirzebruch-Milnor class}, even though $K^{\Cal Prop} _{\ell.c.i}(\m V/X)$ is not (a subgroup of ) the motivic group $K_0(\m V/X)$, but because it is defined by using the motivic Hirzebruch class ${T_y}_*: K_0(\m V/X) \to H_*^{BM}(X) \otimes \bQ[y]$ and  because, if we specailze $\m M {T_y}_*$ to  the case when $y = -1$ and $X$ is a local complete intersection variety in a smooth manifold, we have 
 \begin{align*}
& \m M {T_{-1}}_*([X  \xrightarrow{\op{id}}  X]) \\
 & = (-1)^{\op{dim}X}\left \{td_{(-1)}(T_X)\cap [X]  - {T_{-1}}_* \left (\Theta ([X  \xrightarrow{\op{id}}  X]) \right ) \right \}\\
 & = (-1)^{\op{dim}X}\left (c^{FJ}_*(X) - c_*(X) \right ),
\end{align*}
 which is the Milnor class $\m M(X)$ of $X$. Thus $\m M {T_{-1}}_*: K^{\Cal Prop} _{\ell.c.i}(\m V/X )  \to H_{**}^{BM}(X)\otimes \bQ[y]$ is called \emph{the motivic Milnor class (or Chern--Milnor class)}. The more general one
 $$\m M {T_y}_*: K^{\Cal Prop} _{\ell.c.i}(\m V/X \xrightarrow{h}  S)  \to H_{**}^{BM}(X)\otimes \bQ[y]$$
 is called \emph{a motivic generalized Hirzebruch-Milnor class}.
 
 In fact, if the base variety $S$ is  a $\bQ$-homology manifold or a rational homology manifold, the fundamental class $[S] \in \bH(S \to pt) = H_*^{BM}(S)$ is a strong orientation (see \cite[Part I, \S 2.6]{Fulton-MacPherson}), namely we have the following isomorphism (see \cite{BSY3})
 $$\bullet [S] : \bH (X \xrightarrow{h}  S)\otimes \bQ \overset{\cong} \to \bH(X \to pt)\otimes \bQ =H_*^{BM}(X)\otimes \bQ.$$
Which is a generalized Poincar\'e duality isomorphism, hence denoted by $\m {PD}_h$. Indeed, when $X$ is a rational homology compact manifold, for the identity $id_X: X \to X$, the above isomorphism is nothing but the classical Poincar\'e duality isomorphism 
$$\cap [X]: H^*(X)\otimes \bQ \to H_*(X)\otimes \bQ.$$ 

Examples of a $\bQ$-homology manifold (e.g., see \cite[\S1.4 Rational homology manifolds]{Borho-MacPherson}) are surfaces with Kleinian sigularites, the moduli space of curves of a given genus, Satake's $V$-manifolds or orbifolds, in particular, the quotient of a nonsingular variety by a finite group action on.

Thus we can get the following corollary:
 
 \begin{cor} Let the base variety $S$ be a $\bQ$-homology manifold. On the category $\m V_S$ there exists a unique natural transformation
$$\widetilde {\m M {T_y}_*}: K^{\Cal Prop} _{\ell.c.i}(\m V/X \xrightarrow{h}  S)  \to \bH_{**}(X \xrightarrow{h}  S)\otimes \bQ[y]$$
such that for a local complete intersection morphism $h: X \to S$ the homomorphism $\widetilde {\m M {T_y}_*}: K^{\Cal Prop} _{\ell.c.i}(\m V/X \xrightarrow{h}  S)  \to \bH(X \xrightarrow{h}  S)\otimes \bQ[y]$ satisfies that 
$$\widetilde {\m M {T_y}_*}([X  \xrightarrow{\op{id}_X}  X]) = (-1)^{\op{dim}X}\left (\widetilde {{T_y}_*^{FJ}} - \m {PD}_h^{-1} \circ {T_y}_*\right )([X  \xrightarrow{\op{id}_X}  X]).$$
Here $\bH_{**}(X \xrightarrow{h}  S)$ is the twisted bivariant homology theory with the twisted pushforward $f_{**} :=(-1)^{\op{codim}(f)}f_*$.
\end{cor}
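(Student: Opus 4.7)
My plan is to mimic the construction of the preceding corollary, but to lift the second term of the Hirzebruch--Milnor class from Borel--Moore homology into the bivariant theory by inverting the Poincar\'e duality isomorphism $\m{PD}_h$, whose existence is guaranteed by the hypothesis that $S$ is a $\bQ$-homology manifold.

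Concretely, for a generator $[V  \xrightarrow{p}  X] \in K^{\Cal Prop}_{\ell.c.i}(\m V/X \xrightarrow{h}  S)$, I would set
$$\widetilde{\m M {T_y}_*}([V  \xrightarrow{p}  X]) := (-1)^{\op{dim}V}\Bigl(\widetilde{{T_y}_*^{FJ}}([V  \xrightarrow{p}  X]) - \m{PD}_h^{-1}\bigl({T_y}_* \circ \Theta([V \xrightarrow{p} X])\bigr)\Bigr).$$
The first term is a bivariant class by the earlier proposition; the second is well-defined because $\m{PD}_h = \bullet\,[S]$ is an isomorphism under our hypothesis on $S$. The required normalization on $[X \xrightarrow{\op{id}_X} X]$ then follows by direct inspection.

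For naturality under the pushforward along a proper morphism $f: X \to Y$ in $\m V_S$, the first term $\widetilde{{T_y}_*^{FJ}}$ is already natural by the earlier proposition. For the second term, I would use the compatibility of the bivariant product with the bivariant pushforward from the Fulton--MacPherson axioms: for $\alp \in \bH(X \xrightarrow{h}  S)$,
$$f_*(\alp) \bullet [S] = f_*(\alp \bullet [S]),$$
which yields $\m{PD}_k \circ f_* = f_* \circ \m{PD}_h$ and hence $f_* \circ \m{PD}_h^{-1} = \m{PD}_k^{-1} \circ f_*$. Combined with the naturality of ${T_y}_*$ on $K_0(\m V/-)$, this shows that the second term also intertwines $f_*$. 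The dimension sign $(-1)^{\op{dim}V}$ is invariant under postcomposition with $f$ (since $V$ itself is unchanged by the pushforward on generators), and the result assembles into a natural transformation with respect to the twisted pushforward on $\bH_{**}$ in exactly the same way as in the preceding corollary.

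Uniqueness I would obtain by the standard reduction: any transformation $\Phi$ satisfying the normalization is determined on every generator by writing $[V \xrightarrow{p} X] = p_*[V \xrightarrow{\op{id}_V} V]$ (note $h\circ p$ is l.c.i., so $[V \xrightarrow{\op{id}_V} V]$ is an admissible generator over the base $(V, h\circ p)$), at which point the normalization condition pins down the value of $\Phi$ on the identity class. The step requiring the most care is the intertwining $f_* \circ \m{PD}_h^{-1} = \m{PD}_k^{-1} \circ f_*$ together with the sign bookkeeping against the twist in the target pushforward; both are routine once the bivariant projection formula recalled above is available, but they are the only non-formal ingredients beyond what the earlier corollary already supplies.
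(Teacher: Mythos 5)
Your proposal is correct in substance and follows exactly the route the paper intends (the paper in fact gives no separate proof of this corollary): transport the already-constructed $\m M{T_y}_*$ through $\m{PD}_h^{-1}=(\bullet\,[S])^{-1}$, which exists because $S$ is a $\bQ$-homology manifold, with naturality coming from the bivariant axiom $f_*(\alp\bullet[S])=f_*(\alp)\bullet[S]$ and uniqueness by resolving a generator to the identity class over its own base. One small caveat on bookkeeping: with the prefactor $(-1)^{\op{dim}V}$ your formula intertwines the \emph{untwisted} pushforward, so to be natural into $\bH_{**}$ with $f_{**}=(-1)^{\op{codim}(f)}f_*$ the sign on a generator $[V\xrightarrow{p}X]$ should be $(-1)^{\op{dim}X}$ (equivalently, define the value as $p_{**}$ applied to the normalized value on $[V\xrightarrow{\op{id}_V}V]$) --- but this is the same sign ambiguity already present in the paper's own definition of $\m M{T_y}_*$, so your argument matches the paper's.
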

 
\begin{rem}
\begin{enumerate}
\item $\widetilde {\m M {T_y}_*}: K^{\Cal Prop} _{\ell.c.i}(\m V/X \xrightarrow{h}  S)  \to \bH_{**}(X \xrightarrow{h}  S)\otimes \bQ[y]$ shall be called \emph{a motivic ``bivariant" Hirzebruch--Milnor class}, even thought the source group $K^{\Cal Prop} _{\ell.c.i}(\m V/X \xrightarrow{h}  S)$ is not a bivariant theory, but the target group $\bH_{**}(X \xrightarrow{h}  S)\otimes \bQ[y]$ is a bivariant theory.
\item Note that when the base variety $S$ is a point, $\widetilde {\m M {T_y}_*}: K^{\Cal Prop} _{\ell.c.i}(\m V/X \xrightarrow{h}  S)  \to \bH_{**}(X \xrightarrow{h}  S)\otimes \bQ[y]$ is the same as $\m M {T_y}_*: K^{\Cal Prop} _{\ell.c.i}(\m V/X)  \to H_{**}^{BM}(X)\otimes \bQ[y]$.\\
\end{enumerate} 
\end{rem}

\begin{pro}\label{Todd-Milnor} In the case when $y = 0$, the Todd--Milnor class $\m M {T_0}_*: K^{\Cal Prop} _{\Cal \ell.c.i}(\m V/X)  \to H_*^{BM}(X)\otimes \bQ$ vanishes on the subgroup generated by $[V  \xrightarrow{p}  X]$ with $V$ being Du Bois varieties:
$$\m M{T_{0}}_* ([V  \xrightarrow{p}  X]) = 0 \quad \text {if $V$ is a Du Bois variety.}$$
\end{pro}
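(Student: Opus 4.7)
The plan is to unpack the defining formula for $\mathcal M{T_0}_*$ and to show that the two terms inside the pushforward cancel. From the explicit formula used in the proof of the preceding corollary,
$$\mathcal M{T_0}_*([V \xrightarrow{p} X]) = (-1)^{\dim V}\, p_*\bigl(td(T_{h\circ p}) \cap [V] - {T_0}_*(V)\bigr),$$
so it is enough to establish the identity
$$td(T_{h\circ p}) \cap [V] = {T_0}_*(V)$$
in $H^{BM}_*(V)\otimes \bQ$, under the hypotheses that $V$ is Du Bois and $h\circ p\colon V \to S$ is a local complete intersection morphism.

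First I would invoke Remark~\ref{DuBois}, which is exactly the point of the Du Bois hypothesis: it gives ${T_0}_*(V) = td_*([\m O_V])$, where $td_*$ is the Baum--Fulton--MacPherson Todd class transformation. Next I would appeal to the Verdier--Riemann--Roch theorem for the l.c.i.\ morphism $f := h\circ p\colon V \to S$. Factoring $f$ locally as a regular closed embedding $V \hookrightarrow M$ followed by a smooth morphism $M \to S$, one applies VRR in its standard form for the smooth piece and the Baum--Fulton--MacPherson Riemann--Roch for the regular embedding (via deformation to the normal cone) to obtain $td_*([\m O_V]) = td(T_f) \cap [V]$. Combining the two identifications produces the required cancellation inside $p_*$.

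The main obstacle will be the VRR step, especially when the base $S$ is itself singular. Here one needs the compatibility $f^![\m O_S] = [\m O_V]$ in $G_0(V)$, which rests on the Tor-vanishing inherent in an l.c.i.\ morphism, together with the compatibility of the Gysin pullback on Borel--Moore homology with fundamental classes so that $f^*[S]$ genuinely represents $[V]$. Once these two points are settled, chasing the factorization of $f$ through the known VRR statements at each stage is routine bookkeeping.
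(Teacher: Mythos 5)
Your proposal follows essentially the same route as the paper's proof: unpack the defining formula to reduce the claim to the identity $td(T_V)\cap [V] = {T_0}_*(V)$, use Remark \ref{DuBois} to get ${T_0}_*(V) = td_*(\m O_V)$ for $V$ Du Bois, and then identify $td_*(\m O_V)$ with $td(T_V)\cap [V]$ via Baum--Fulton--MacPherson Riemann--Roch for a local complete intersection (the paper simply cites \cite[Corollary 18.3.1 (b)]{Fulton-book}, whereas you re-derive that fact by factoring through a smooth ambient variety and deforming to the normal cone). The complication you anticipate over a singular base does not actually arise here, because the proposition concerns $K^{\Cal Prop}_{\ell.c.i}(\m V/X)$ with $S$ a point, so the l.c.i.\ hypothesis on $h\circ p$ just says that $V$ is a local complete intersection variety in a smooth variety and $T_{h\circ p}=T_V$ is the absolute virtual tangent bundle.
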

\begin{proof} For a local complete intersection variety $V$ in a smooth variety $M$, we have that 
\begin{align*}
& \m M {T_0}_* ([V  \xrightarrow{p}  X]) \\
& = p_{**} \m M {T_0}_*([V  \xrightarrow{\op{id}}  V]) \\
& = (-1)^{\op{dim}X} p_*\left ( td(T_V)\cap [V]  - {T_0}_*  ([V  \xrightarrow{\op{id}}  V]) \right ) \\
& = (-1)^{\op{dim}X} p_*\left ( td(T_V)\cap [V]  - {T_0}_*(V) \right ).
\end{align*}
If $V$ is a Du Bois variety, it follows from Remark \ref{DuBois} that ${T_0}_*(V) = td_*(\m O_V)$.
On the other hand we observe that it follows from the properties of the Baum--Fulton--MacPherson's Riemann--Roch $td_*: G_0(X) \to H_*^{BM}(X)\otimes \bQ$ (see \cite [Corollary 18.3.1 (b)]{Fulton-book}, or more generally \cite [PART II, \S 0.2 Summary of results]{Fulton-MacPherson}) that for a local complete intersection variety $V$ in a smooth variety $M$ we have
$$td_*(\m O_V) = td(T_V) \cap [V].$$
Therefore, if $V$ is a local complete intersection variety $V$ in a smooth variety $M$ and also a Du Bois variety, then we have
$$\m M {T_0}_* ([V  \xrightarrow{p}  X]) = 0.$$
\end{proof}

\begin{cor} If the base variety $S$ is a $\bQ$-homology manifold, then the motivic bivariant Todd--Milnor class 
$\widetilde {\m M {T_0}_*}: K^{\Cal Prop} _{\ell.c.i}(\m V/X \xrightarrow{h}  S)  \to \bH_{**}(X \xrightarrow{h}  S)\otimes \bQ$ vanishes on the subgroup generated by $[V  \xrightarrow{p}  X]$ with $V$ being Du Bois varieties.
\end{cor}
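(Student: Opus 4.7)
The plan is to reduce the bivariant statement to the absolute (non-bivariant) statement already established in Proposition~\ref{Todd-Milnor}, using the generalized Poincar\'e duality isomorphism available whenever $S$ is a $\bQ$-homology manifold.

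First, I would invoke the hypothesis on $S$: since $S$ is a $\bQ$-homology manifold, the fundamental class $[S]\in \bH(S\to pt)$ is a strong orientation, so the bivariant product with $[S]$ induces the isomorphism
$$\m{PD}_h = \bullet[S]\colon \bH(X\xrightarrow{h}S)\otimes \bQ \xrightarrow{\;\cong\;} H_*^{BM}(X)\otimes\bQ$$
mentioned in the text just before the corollary. The same isomorphism of course holds after twisting the pushforward by $(-1)^{\operatorname{codim}(-)}$, i.e.\ between $\bH_{**}(X\xrightarrow{h}S)\otimes\bQ$ and $H^{BM}_{**}(X)\otimes\bQ$.

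Second, I would check the compatibility
$$\m{M}{T_0}_* \;=\; \bigl(\bullet[S]\bigr)\circ \widetilde{\m{M}{T_0}_*}$$
on $K^{\Cal Prop}_{\ell.c.i}(\m V/X\xrightarrow{h}S)$. This is the exact analogue of the relation ${\ga_{c\ell}}_* = \widetilde{{\ga_{c\ell}}_*}\bullet[S]$ from the earlier remark: unpacking the definition,
$$\widetilde{\m{M}{T_y}_*}([V\xrightarrow{p}X])\bullet[S] = (-1)^{\op{dim}V}\!\left(\widetilde{{T_y}_*^{FJ}}\bullet[S] - \m{PD}_h^{-1}\!\circ {T_y}_*([V\xrightarrow{p}X])\bullet[S]\right),$$
and the first term collapses to ${T_y}_*^{FJ}([V\xrightarrow{p}X])$ by the bivariant-to-absolute compatibility of $\widetilde{{T_y}_*^{FJ}}$, while the second term collapses to ${T_y}_*([V\xrightarrow{p}X])$ because $\m{PD}_h=\bullet[S]$. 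Specializing to $y=0$ gives the desired identity.

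Third, combining the two previous steps with the fact that $\bullet[S]$ is an isomorphism, I conclude that $\widetilde{\m{M}{T_0}_*}([V\xrightarrow{p}X])=0$ if and only if $\m{M}{T_0}_*([V\xrightarrow{p}X])=0$. The latter is precisely the content of Proposition~\ref{Todd-Milnor} whenever $V$ is Du Bois, so the vanishing on generators follows, and by linearity on the whole subgroup spanned by such classes.

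The only delicate point is the second step: one has to verify that the bivariant lift $\widetilde{\m{M}{T_0}_*}$ is genuinely compatible with $\bullet[S]$, i.e., that the term $\m{PD}_h^{-1}\circ{T_y}_*$ appearing in the definition of $\widetilde{\m{M}{T_y}_*}$ interacts correctly with the bivariant product. This is essentially formal, but it uses that $\bullet[S]$ is an \emph{isomorphism} (not merely a natural transformation), which is exactly why the $\bQ$-homology manifold hypothesis on $S$ is needed; without it, the definition of $\widetilde{\m{M}{T_y}_*}$ via $\m{PD}_h^{-1}$ would not even make sense.
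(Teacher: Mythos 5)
Your proposal is correct and follows essentially the same route as the paper: the paper's proof likewise observes that $\widetilde {\m M {T_0}_*}([V \xrightarrow{p} X]) \bullet [S] = \m M {T_0}_*([V \xrightarrow{p} X]) = 0$ by Proposition~\ref{Todd-Milnor} and then concludes from the fact that $\bullet [S]$ is an isomorphism when $S$ is a $\bQ$-homology manifold. Your extra verification of the compatibility $\m M{T_0}_* = (\bullet[S])\circ\widetilde{\m M{T_0}_*}$ is a useful elaboration of a step the paper takes for granted, but it does not change the argument.
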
 

\begin{proof} It follows from that for an element $[V  \xrightarrow{p}  X]$ with $V$ being a Du Bois variety $\widetilde {\m M {T_0}_*}([V  \xrightarrow{p}  X]) \bullet [S] = \m M {T_0}_*([V  \xrightarrow{p}  X]) = 0$  and $\bullet [S] : \bH (X \xrightarrow{h}  S)\otimes \bQ \overset{\cong} \to \bH(X \to pt)\otimes \bQ =H_*^{BM}(X)\otimes \bQ$ is an isomorphism when $S$ is a $\bQ$-homology manifold.
\end{proof}

\begin{rem} 
Let us compare with the results in Theorem \ref{unification}. Neither of the following three diagrams commutes in general:
\begin{itemize} 
\item $\underline {y = -1}:$
$$\xymatrix{
& K^{\Cal Prop} _{\ell.c.i}(\m V/X) \ar [dl]_{\ga_F} \ar [dr]^{{T_{-1}}^{FJ}_*} \\
{F(X) } \ar [rr] _{c_*}& &  H_*^{BM}(X)\otimes \bQ.}
$$
\item $\underline {y = 0}:$
$$\xymatrix{
& K^{\Cal Prop} _{\ell.c.i}(\m V/X) \ar [dl]_{\ga_{G_0}} \ar [dr]^{{T_{0}}^{FJ}_*} \\
{G_0(X) } \ar [rr] _{td_*}& &  H_*^{BM}(X)\otimes \bQ.}
$$
\item $\underline {y = 1}:$
$$\xymatrix{
& K^{\Cal Prop} _{\ell.c.i}(\m V/X)  \ar [dl]_{\ga_{\Omega}} \ar [dr]^{{T_1}^{FJ}_*} \\
{\Omega(X) } \ar [rr] _{L_*}& &  H_*^{BM}(X)\otimes \bQ.}
$$
\end{itemize}
Hence it is natural or reasonable to consider the following commutative diagrams with the corresponding Milnor classes and the corresponding looked-for natural transformations
$$\m M {T_{-1}}_*: K^{\Cal Prop} _{\ell.c.i}(\m V/X)  \to H_{**}^{BM}(X)\otimes \bQ, \quad \m Mc_*:F(X) \to H_{**}^{BM}(X) \otimes \bQ$$
$$\m M {T_0}_*: K^{\Cal Prop} _{\ell.c.i}(\m V/X)  \to H_{**}^{BM}(X)\otimes \bQ, \quad \m Mtd_*:G_0(X) \to H_{**}^{BM}(X) \otimes \bQ,$$
$$\m M {T_1}_*: K^{\Cal Prop} _{\ell.c.i}(\m V/X)  \to H_{**}^{BM}(X)\otimes \bQ, \quad \m ML_*:\Omega(X) \to H_{**}^{BM}(X) \otimes \bQ:$$

\begin{itemize} 
\item $\underline {y = -1}:$
$$\xymatrix{
& K^{\Cal Prop} _{\ell.c.i}(\m V/X)\ar [dl]_{\ga_F} \ar [dr]^{\m M {T_{-1}}_*} \\
{F(X) } \ar [rr] _{\m Mc_*}& &  H_{**}^{BM}(X)\otimes \bQ.}
$$
\item $\underline {y = 0}:$
 $$\xymatrix{
& K^{\Cal Prop} _{\ell.c.i}(\m V/X)\ar [dl]_{\ga_F} \ar [dr]^{\m M {T_{0}}_*} \\
{G_0(X) } \ar [rr] _{\m Mtd_*}& &  H_{**}^{BM}(X)\otimes \bQ.}
$$
\item $\underline {y = 1}:$
 $$\xymatrix{
& K^{\Cal Prop} _{\ell.c.i}(\m V/X)\ar [dl]_{\ga_F} \ar [dr]^{\m M {T_{1}}_*} \\
{\Omega(X) } \ar [rr] _{\m ML_*}& &  H_{**}^{BM}(X)\otimes \bQ.}
$$
\end{itemize}
\end{rem}

\section{Verdier-type Riemann--Roch formulas}
In this section we show \emph{Verdier-type Riemann--Roch formulas}.

First we show \emph {a Verdier-type Riemann--Roch formula for the motivic canonical $c\ell$ class for a smooth morphism}. Here we emphasize that we need a smooth morphism instead of a local complete intersection morphism:

\begin{pro}\label{VRR-HFJ} Let $f:X \to Y$ be a smooth morphism in the category $\m V_S$:
$$
\xymatrix{
X \ar[dr]_ {h}\ar[rr]^ {f} && Y \ar[dl]^{k }\\
& S.}
$$
Then the following diagram commutes:
$$\CD
K^{\Cal Prop}  _{\ell.c.i}(\m V/Y \xrightarrow{k}  S) @> {c\ell_*^{FJ}} >> H_*^{BM}(Y) \otimes R \\
@V f^*VV @VV c\ell(T_f) \cap f^* V\\
K^{\Cal Prop}  _{\ell.c.i}(\m V/X \xrightarrow{h}  S)  @>> {c\ell_*^{FJ}} > H_*^{BM}(X) \otimes R, \endCD
$$
Here $f^*:H_*^{BM}(Y) \to H_*^{BM}(X)$ is the Gysin pullback homomorphism.
\end{pro}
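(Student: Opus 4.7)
The plan is to trace an arbitrary generator $[W \xrightarrow{p} Y] \in K^{\Cal Prop}_{\ell.c.i}(\m V/Y \xrightarrow{k} S)$ around both routes of the square and match the resulting classes in $H_*^{BM}(X)\otimes R$. First I would form the smooth base change
$$\xymatrix{W' \ar[r]^{f'}\ar[d]_{p'} & W\ar[d]^{p}\\ X\ar[r]_{f} & Y}$$
with $W':=X\times_Y W$. Since $f$ is smooth so is $f'$; since $p$ is proper so is $p'$; and because $h\circ p'=k\circ p\circ f'$ is a composition of l.c.i.\ morphisms, $f^*[W \xrightarrow{p} Y]=[W' \xrightarrow{p'} X]$ belongs to $K^{\Cal Prop}_{\ell.c.i}(\m V/X \xrightarrow{h} S)$, so both routes of the diagram are well-defined.

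Next I would unfold each route. The top--right--bottom route gives
$$c\ell(T_f)\cap f^*\bigl(p_*(c\ell(T_{k\circ p})\cap [W])\bigr),$$
which I would rewrite using, in order: (a) smooth base change $f^* p_* = p'_* f'^*$ on Borel--Moore homology; (b) the projection formula $c\ell(T_f)\cap p'_*(-)=p'_*\bigl(p'^*c\ell(T_f)\cap -\bigr)$; and (c) compatibility of the smooth Gysin pullback with cap, namely $f'^*(\alpha\cap [W])=f'^*\alpha\cap [W']$ together with $f'^*[W]=[W']$. This produces $p'_*\bigl(p'^*c\ell(T_f)\cdot c\ell(f'^*T_{k\circ p})\cap [W']\bigr)$. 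On the other hand, for the l.c.i.\ composition $h\circ p'=(k\circ p)\circ f'$ the virtual relative tangent splits as $T_{h\circ p'}=f'^*T_{k\circ p}+T_{f'}$ in $K^0(W')$, with $T_{f'}=p'^*T_f$ by smoothness of $f$. Multiplicativity of $c\ell$ then gives $c\ell(T_{h\circ p'})=p'^*c\ell(T_f)\cdot c\ell(f'^*T_{k\circ p})$, so the expression collapses to $p'_*\bigl(c\ell(T_{h\circ p'})\cap [W']\bigr)=c\ell_*^{FJ}(f^*[W \xrightarrow{p} Y])$, the left--bottom route.

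The main obstacle is the tacit multiplicativity assumption on $c\ell$: the manipulation above genuinely requires $c\ell(A+B)=c\ell(A)\cdot c\ell(B)$ in cohomology, so I would either restrict the statement to multiplicative characteristic classes (which is the case of interest in the sequel, namely $td_{(y)}$ and its specializations $c$, $td$, $L$) or list multiplicativity as an explicit hypothesis. A secondary bookkeeping remark is that smoothness of $f$ is used twice and cannot be weakened to l.c.i.: once to guarantee that the base change $f'$ is itself l.c.i., so that $f^*[W \xrightarrow{p} Y]$ lands in the correct Grothendieck group (cf.\ the warning in the remark following the definition of $K^{\Cal Prop}_{\ell.c.i}$), and once to produce the honest bundle identity $T_{f'}=p'^*T_f$ that feeds the multiplicativity step.
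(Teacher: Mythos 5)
Your proof follows the paper's argument essentially verbatim: the same fiber square, the base change formula $f^*p_* = p'_*{f'}^*$ for $p$ proper and $f$ flat, the projection formula, the compatibility of flat pullback with cap products, and the splitting $T_{h\circ p'} = T_{f'} + {f'}^*T_{k\circ p}$ in $K^0(W')$ together with $T_{f'} = {p'}^*T_f$. Your caveat about multiplicativity of $c\ell$ is well taken but is already implicit in the paper's setup, since $c\ell$ is applied throughout to virtual bundles such as $T_h \in K^0$, which only makes sense for multiplicative characteristic classes.
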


\begin{proof} Let $[W \xrightarrow{p}  Y] \in K^{\Cal Prop}  _{\ell.c.i}(\m V/Y \xrightarrow{k}  S)$ and consider the following diagram whose top square is a fiber square:

\begin{equation}\label{diagram4}
\xymatrix{
W' \ar[rr]^{f'}\ar[d]_{p'} && W \ar[d]^{p}\\
X \ar[dr]_ {h}\ar[rr]^ {f} && Y \ar[dl]^{k }\\
& S.}
\end{equation}
We want to show that 
$$c\ell_*^{FJ}f^*([W \xrightarrow{p}  Y] ) = c\ell(T_f) \cap f^*\left (c\ell_*^{FJ}([W \xrightarrow{p}  Y] ) \right ).$$
\begin{align*}
c\ell_*^{FJ}f^*([W \xrightarrow{p}  Y] ) & = c\ell_*^{FJ}([W' \xrightarrow{p'}  X] )\\
& = p'_*(c\ell (T_{h \circ p'}) \cap [W']) \quad \text {(by definition of $c\ell_*^{FJ}$)}
\end{align*}
$$c\ell(T_f) \cap f^*\left (c\ell_*^{FJ}([W \xrightarrow{p}  Y] ) \right )
= c\ell(T_f) \cap f^* \left (p_*(c\ell (T_{k \circ p}) \cap [W] ) \right ).$$
Since $p:W \to Y$ is proper and $f:X \to Y$ is smooth, hence flat, it follows from \cite[Proposition 1.7]{Fulton-book} that we have the \emph{base change formula}:$f^* p_* = p'_*{f'}^*$. The above equality continues as follows:
\begin{align*}
\hspace{2cm}
& = c\ell(T_f) \cap p'_*{f'}^*(c\ell (T_{k \circ p}) \cap [W] ) \\
& = p'_*\left ({p'}^* c\ell(T_f) \cap {f'}^*(c\ell (T_{k \circ p}) \cap [W] ) \right )\, \,\text {(projection formua)}\\
&= p'_*\left (c\ell({p'}^* T_f) \cap (c\ell ({f'}^*T_{k \circ p}) \cap {f'}^*[W] ) \right )\, \, \text {(by \cite[Theorem 3.2]{Fulton-book})}\\
&= p'_*\left ((c\ell( T_{f'}) \cup c\ell ({f'}^*T_{k \circ p})) \cap [{f'}^{-1}(W)] ) \right )\, \, \text {(by \cite[Lemma1.7.1]{Fulton-book})}\\
&= p'_*\left (c\ell( T_{f'}+{f'}^*T_{k \circ p}) \cap [W'] \right )\\
&= p'_*\left (c\ell( T_{k\circ p \circ f'}) \cap [W'] \right )\quad \text {($T_{k\circ p \circ f'} =T_{f'}+{f'}^*T_{k \circ p} \in K^0(W')$ )}\\
&= p'_*\left (c\ell( T_{h \circ p'}) \cap [W'] \right ).
\end{align*}
Therefore we get that $c\ell_*^{FJ}f^*([W \xrightarrow{p}  Y] ) = c\ell(T_f) \cap f^*\left (c\ell_*^{FJ}([W \xrightarrow{p}  Y] \right ).$
\end{proof}

Secondly we show \emph {a Verdier-type Riemann--Roch formula for the motivic Hirzebruch class for a smooth morphism}:

\begin{pro} Let $f:X \to Y$ be a smooth morphism in the category $\m V_S$ as in Proposition \ref{VRR-HFJ}.
Then the following diagram commutes:
$$\CD
K^{\Cal Prop}  _{\ell.c.i}(\m V/Y \xrightarrow{k}  S) @> {{T_y}_*} >> H_*^{BM}(Y) \otimes \bQ[y]\\
@V f^*VV @VV td_{(y)}(T_f) \cap f^* V\\
K^{\Cal Prop}  _{\ell.c.i}(\m V/X \xrightarrow{h}  S)  @>> {{T_y}_*} > H_*^{BM}(X) \otimes \bQ[y]. \endCD
$$
\end{pro}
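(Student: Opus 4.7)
The plan is to reduce the statement to the Verdier--Riemann--Roch formula for the motivic Hirzebruch class ${T_y}_*: K_0(\m V/\cdot) \to H_*^{BM}(\cdot)\otimes\bQ[y]$ under smooth pullback, and then to establish that formula by reducing to smooth representatives via the surjectivity of $\Theta^{sm}$ and running the same calculation as in the proof of Proposition \ref{VRR-HFJ}.

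For $[W \xrightarrow{p} Y] \in K^{\Cal Prop}_{\ell.c.i}(\m V/Y \xrightarrow{k} S)$, form the fibre square with $W' = X\times_Y W$, $p': W' \to X$ and $f': W' \to W$. By the convention fixed just before this section, ${T_y}_*$ on $K^{\Cal Prop}_{\ell.c.i}(\m V/\cdot \xrightarrow{\cdot} S)$ is shorthand for the composition with $\Theta$, and the smooth pullback on $K^{\Cal Prop}_{\ell.c.i}$ is given on representatives by exactly the fibre product that defines the smooth pullback on $K_0(\m V/\cdot)$. Hence the commutativity of the diagram will follow once we show that for every $\alpha \in K_0(\m V/Y)$
\[
{T_y}_*(f^*\alpha) \;=\; td_{(y)}(T_f)\cap f^*\bigl({T_y}_*\alpha\bigr),
\]
where on the left $f^*$ is the smooth pullback on $K_0(\m V/\cdot)$ and on the right $f^*$ is the Gysin (flat) pullback on Borel--Moore homology.

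Both sides of this identity are homomorphisms $K_0(\m V/Y) \to H_*^{BM}(X)\otimes\bQ[y]$, so by the surjectivity of $\Theta^{sm}: \op{Iso}^{\Cal Prop}(Sm/Y)\to K_0(\m V/Y)$ (Hironaka resolution, Bittner) it suffices to check the equality on a generator $[V\xrightarrow{h} Y]$ with $V$ non-singular. Since smoothness is stable under base change of the smooth $f$, the variety $V':=V\times_Y X$ is also non-singular, $f^*[V\xrightarrow{h} Y] = [V'\xrightarrow{h'} X]$, and the normalization condition of ${T_y}_*$ on smooth varieties reduces the identity to
\[
h'_*\bigl(td_{(y)}(TV')\cap [V']\bigr) \;=\; td_{(y)}(T_f)\cap f^*\bigl(h_*(td_{(y)}(TV)\cap [V])\bigr),
\]
which is established by precisely the chain of equalities displayed in the proof of Proposition \ref{VRR-HFJ}, with $c\ell$ replaced by $td_{(y)}$: base change $f^* h_* = h'_* g^*$ for $g: V' \to V$ the base change of $f$ (flatness of $f$, properness of $h$), the projection formula for $h'$, naturality of Chern classes \cite[Thm.~3.2]{Fulton-book}, compatibility of fundamental classes with flat pullback \cite[Lem.~1.7.1]{Fulton-book}, multiplicativity of $td_{(y)}$, and the identities $T_g = (h')^* T_f$ and $TV' = T_g + g^*TV$ in $K^0(V')$ coming from the short exact sequence associated to the smooth morphism $g$.

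The main obstacle is the foundational compatibility check at the start: one must verify that the smooth pullback on $K^{\Cal Prop}_{\ell.c.i}(\m V/\cdot \xrightarrow{\cdot} S)$ descends through $\Theta$ to the usual smooth pullback on $K_0(\m V/\cdot)$, which amounts to observing that the fibre-product construction respects the additivity (scissor) relation defining $K_0(\m V/\cdot)$, a standard consequence of the flatness of $f$.
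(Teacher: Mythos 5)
Your proposal is correct and follows essentially the same route as the paper: the paper likewise uses Hironaka's resolution (i.e., the surjectivity of $\Theta^{sm}$) to reduce to generators $[W \xrightarrow{p} Y]$ with $W$ smooth --- explicitly noting that after this reduction the l.c.i.\ condition on $k\circ p$ is no longer needed, so the check really takes place in $K_0(\m V/Y)$ --- observes that $W'$ is then smooth, and runs the identical chain of base change, projection formula, naturality, multiplicativity and $T_{f'} = TW' - {f'}^*TW$ as in Proposition \ref{VRR-HFJ}. Your explicit remark that the smooth pullback on $K^{\Cal Prop}_{\ell.c.i}$ must be checked to descend through $\Theta$ to the smooth pullback on $K_0(\m V/\cdot)$ is a compatibility the paper leaves implicit, but it does not change the argument.
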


\begin{proof} For the above diagram (\ref{diagram4})
we want to show that 
$${T_y}_*f^*([W \xrightarrow{p}  Y] ) = td_{(y)}(T_f) \cap f^*\left ({T_y}_*([W \xrightarrow{p}  Y] ) \right ).
$$
Since it follows from Hironaka's resolution of singularities that any $[W \xrightarrow{p}  Y]$ can be expressed as a linear combination
$$\sum_V a_V [V \xrightarrow{p_V}  Y] $$
where $a_V \in \bZ$, $V$ is a smooth variety , and $p_V :V \to Y$ is proper, if suffices to show that
$${T_y}_*f^*([V \xrightarrow{p_V}  Y]) = td_{(y)}(T_f) \cap f^*\left ({T_y}_*([V \xrightarrow{p_V}  Y]) \right ).$$
Hence, from the beginning we can assume that in the above diagram \ref{diagram4} $W$ is smooth and $p:W \to Y$ is proper, but here note that we DO NOT need the requirement that the composite $k \circ p: W \to S$ is a local complete intersection morphism. Here it should be noted that since $W$ is smooth and $f':W' \to W$ is smooth (because $f'$ is the pullback of the smooth morphism $f:X \to Y$), $W'$ is also smooth, which is crucial below.
\begin{align*}
{T_y}_*f^*([W \xrightarrow{p}  Y] ) & = {T_y}_*([W' \xrightarrow{p'}  X] )\\
& = {T_y}_*(p'_*[W' \xrightarrow{\op{id}_{W'}}  W'] )\\
& = p'_*{T_y}_*([W' \xrightarrow{\op{id}_{W'}}  W'] ) \\
& = p'_*(td_{(y)}(TW') \cap [W']) \quad \text {(since $W'$ is smooth)}.
\end{align*}
On the other hand we have
\begin{align*}
& td_{(y)}(T_f) \cap f^* {T_y}_*([W \xrightarrow{p}  Y] ) \\
& = td_{(y)}(T_f) \cap f^*{T_y}_*(p_*[W \xrightarrow{\op{id}_{W}}  W] )\\
& = td_{(y)}(T_f) \cap f^*p_* ({T_y}_*([W \xrightarrow{\op{id}_{W}}  W] ) ) \\
& = td_{(y)}(T_f) \cap f^*p_* (td_{(y)}(TW) \cap [W])) \quad \text {(since $W$ is smooth)}\\
& = td_{(y)}(T_f) \cap p'_*{f'}^* (td_{(y)}(TW) \cap [W])) \\
& = p'_*\left ({p'}^*td_{(y)}(T_f) \cap {f'}^* (td_{(y)}(TW) \cap [W])\right ) \\
& = p'_*\left (td_{(y)}({p'}^*T_f) \cap \left ({f'}^* td_{(y)}(TW) \cap {f'}^* [W] \right )\right ) \\
\end{align*}
\begin{align*}
& = p'_*\left (\left (td_{(y)}(T_{f'}) \cup td_{(y)}({f'}^*TW) \right) \cap [{f'}^{-1} W])\right ) \\
& = p'_*\left (td_{(y)}(T_{f'} +{f'}^*TW) \cap [W'])\right ) \\
& = p'_*\left (td_{(y)}(TW') \cap [W'])\right ) \quad \text {(since $T_{f'} = TW' - {f'}^*TW$)}.
\end{align*}
Therefore we get that ${T_y}_*f^*([W \xrightarrow{p}  Y] ) = td_{(y)}(T_f) \cap f^*\left ({T_y}_*([W \xrightarrow{p}  Y] ) \right ).$
\end{proof}

\begin{rem} The above proof of course implies that the following Verdier-type Riemann--Roch formula holds for the motivic Hirzebruch class ${T_y}_*: K_0(\m V/X) \to H_*^{BM}(X) \otimes \bQ[y]$: for a smooth morphism $f:X \to Y$ in the category $\m V$ the following diagram commutes:
$$\CD
K_0(\m V/Y)  @> {{T_y}_*} >> H_*^{BM}(Y) \otimes \bQ[y]\\
@V f^*VV @VV td_{(y)}(T_f) \cap f^* V\\
K_0(\m V/X)   @>> {{T_y}_*} > H_*^{BM}(X) \otimes \bQ[y]. \endCD
$$
\end{rem} 

\begin{defn} For a smooth morphism $f:X \to Y$, the twisted Gysin pullback homomophism $f^{**}:H_*^{BM}(Y) \to H_*^{BM}(X)$ is defined by
$$f^{**} = (-)^{\op {dim} (f)} f^* = (-1)^{\op{dim} X - \op {dim} Y} f^*.$$
(In other words, $(-)^{\op {codim} (f)}f^{**} = (-1)^{\op{dim} Y - \op {dim} X}f^{**} = f^*$.)
The contravariant Borel--Moore homology theory with this twisted pullback homomotphism for smoth morphisms is denoted by $H_{**}^{BM}$.
\end{defn}

In \cite[Theorem 2.2]{Yokura-topology} we obtained a Verdier-type Riemann--Roch formula of the Milnor class in a special case.  The following Verdier-type Riemann--Roch formula of the motivc Hirzebruch--Milnor class is a generalization of this result:

\begin{thm} \label{VRR-HM} For a smooth morphism $f: X \to Y$ in the category $\m V_S$ as in Proposition \ref{VRR-HFJ},
 the following diagram commutes:
$$\CD
K^{\Cal Prop}  _{\ell.c.i}(\m V/Y \xrightarrow{k}  S) @> {\m M{T_y}_*} >> H_{**}^{BM}(Y) \otimes \bQ[y] \\
@V f^*VV @VV td_{(y)}(T_f) \cap f^{**} V \\
K^{\Cal Prop}  _{\ell.c.i}(\m V/X \xrightarrow{h}  S)  @>> {\m M{T_y}_*} > H_{**}^{BM}(X) \otimes \bQ[y].\endCD
$$
\end{thm}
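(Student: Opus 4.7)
The plan is to reduce the theorem to the two Verdier--Riemann--Roch formulas already proved in this section: Proposition~\ref{VRR-HFJ} with $c\ell = td_{(y)}$ (which handles $\widetilde{{T_y}_*^{FJ}}$, or rather its non-bivariant version ${T_y}_*^{FJ}$) and the VRR for the motivic Hirzebruch class ${T_y}_*$ proved in the proposition immediately preceding. The whole content beyond that is sign bookkeeping: the factor $(-1)^{\op{dim}V}$ built into the definition of $\m M{T_y}_*$ is exactly what forces the ordinary Gysin pullback $f^*$ to be replaced by its twisted version $f^{**} = (-1)^{\op{dim}(f)}f^*$ on the target.

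Concretely, I would take a generator $[W \xrightarrow{p} Y]$ of $K^{\Cal Prop}_{\ell.c.i}(\m V/Y \xrightarrow{k} S)$ and form the fiber square as in diagram~(\ref{diagram1}), so that $f^*[W \xrightarrow{p} Y] = [W' \xrightarrow{p'} X]$ with $W' = X\times_Y W$. Since $f$ is smooth, the base change $f':W' \to W$ is smooth as well, so $\op{dim}W' = \op{dim}W + \op{dim}(f)$; moreover $h\circ p' = k\circ p\circ f'$ is lci because it is the composition of an lci morphism with a smooth morphism, so $[W' \xrightarrow{p'} X]$ really lies in $K^{\Cal Prop}_{\ell.c.i}(\m V/X \xrightarrow{h} S)$. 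Unfolding the definition of the motivic Hirzebruch--Milnor class at this class gives
\[
\m M{T_y}_*\bigl(f^*[W \xrightarrow{p} Y]\bigr)
= (-1)^{\op{dim}W + \op{dim}(f)}\bigl({T_y}_*^{FJ} - {T_y}_*\bigr)\bigl(f^*[W \xrightarrow{p} Y]\bigr).
\]

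Applying the two previously established VRR formulas term by term (they are formally identical: $f^*$ on the source corresponds to $td_{(y)}(T_f)\cap f^*$ on the target), their difference satisfies
\[
\bigl({T_y}_*^{FJ} - {T_y}_*\bigr)\bigl(f^*[W \xrightarrow{p} Y]\bigr)
= td_{(y)}(T_f) \cap f^*\bigl(({T_y}_*^{FJ} - {T_y}_*)[W \xrightarrow{p} Y]\bigr).
\]
Substituting and factoring the sign $(-1)^{\op{dim}W}$ back inside recovers the right-hand factor $\m M{T_y}_*[W \xrightarrow{p} Y]$, and the remaining $(-1)^{\op{dim}(f)}$ combines with $f^*$ into $f^{**}$, yielding $td_{(y)}(T_f)\cap f^{**}\m M{T_y}_*[W \xrightarrow{p} Y]$, which is the stated commutativity.

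I expect the main obstacle not to be computational but conceptual, namely the verification that one may legitimately invoke both VRR formulas on the same element of $K^{\Cal Prop}_{\ell.c.i}(\m V/Y \xrightarrow{k} S)$. The FJ-version applies directly. The motivic-Hirzebruch version was stated on the lci Grothendieck group, but its proof proceeds by resolution of singularities and never actually uses that the total map to $S$ is lci---only properness on the source and smoothness of $f$---so it transports to our setting without change. Once this is noted, the theorem is a clean consequence of linearity of the subtraction $\widetilde{{T_y}_*^{FJ}} - {T_y}_*$ and the dimension identity $\op{dim}W' - \op{dim}W = \op{dim}(f)$, which is precisely the sign encoded in the definition of $f^{**}$ and in the twisted Borel--Moore homology functor $H_{**}^{BM}$.
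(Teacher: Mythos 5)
Your proposal is correct and follows essentially the same route as the paper's own proof: apply the two Verdier--Riemann--Roch propositions for ${T_y}_*^{FJ}$ and ${T_y}_*$ to a generator $[W \xrightarrow{p} Y]$, subtract, and absorb the sign discrepancy $(-1)^{\op{dim}W'} = (-1)^{\op{dim}W + \op{dim}(f)}$ into the twisted pullback $f^{**}$. The only cosmetic difference is that you pause to justify invoking the Hirzebruch VRR on the lci Grothendieck group, which the paper takes for granted since that proposition is already stated on $K^{\Cal Prop}_{\ell.c.i}(\m V/Y \xrightarrow{k} S)$.
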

\begin{proof} Let $[W \xrightarrow{p}  Y] \in K^{\Cal Prop}  _{\ell.c.i}(\m V/Y \xrightarrow{k}  S)$. Then we have that
\begin{align*}
& \m M{T_y}_*f^*([W \xrightarrow{p}  Y] ) \\
& = \m M{T_y}_*[W'\xrightarrow{p'}  X] ) \\
& = (-1)^{\op {dim} W'} \left ({T_y}_*^{FJ} - {T_y}_* \right )([W'\xrightarrow{p'}  X]) \\
& = (-1)^{\op {dim} W'} \left ({T_y}_*^{FJ} - {T_y}_* \right )(f^*[W \xrightarrow{p}  Y]) \\
& = (-1)^{\op {dim} W'} \left ({T_y}_*^{FJ}f^* - {T_y}_* f^*\right )([W \xrightarrow{p}  Y]) \\
& = (-1)^{\op {dim} W'}  \left (td_{(y)}(T_f) \cap f^*{T_y}_*^{FJ} - td_{(y)}(T_f) \cap f^*{T_y}_* \right )([W \xrightarrow{p}  Y]) \\
& = (-1)^{\op {dim} W'} td_{(y)}(T_f) \cap f^* \left ({T_y}_*^{FJ} - {T_y}_* \right )([W \xrightarrow{p}  Y]) \\
\end{align*}
\begin{align*}
& = (-1)^{\op {dim} W'} (-)^{\op {codim} (f)} td_{(y)}(T_f) \cap f^{**} \left ({T_y}_*^{FJ} - {T_y}_* \right )([W \xrightarrow{p}  Y]) \\
& = (-1)^{\op {dim} W' + \op{dim} Y - \op {dim} X} td_{(y)}(T_f) \cap f^{**} \left ({T_y}_*^{FJ} - {T_y}_* \right )([W \xrightarrow{p}  Y]) \\
& = (-1)^{\op {dim} W} td_{(y)}(T_f) \cap f^{**} \left ({T_y}_*^{FJ} - {T_y}_* \right )([W \xrightarrow{p}  Y]) \\
& = td_{(y)}(T_f) \cap f^{**} \left ((-1)^{\op {dim} W} \left ({T_y}_*^{FJ} - {T_y}_* \right )([W \xrightarrow{p}  Y]) \right ) \\
& = td_{(y)}(T_f) \cap f^{**} \left (\m M{T_y}_*([W \xrightarrow{p}  Y] ) \right ).
\end{align*}
\end{proof}

Finally we give a ``bivariant version" of Theorem \ref{VRR-HM}:
\begin{cor} For a smooth morphism $f: X \to Y$ in the category $\m V_S$ as in Proposition \ref{VRR-HFJ}, 
 the following diagram commutes:
$$\CD
K^{\Cal Prop}  _{\ell.c.i}(\m V/Y \xrightarrow{k}  S) @> {\widetilde {\m M{T_y}_*}} >> \bH(Y \xrightarrow{h}  S) \otimes \bQ[y] \\
@V f^*VV @VV (-1)^{\op{dim}(f)} td_{(y)}(T_f) \bullet U_f \bullet V \\
K^{\Cal Prop}  _{\ell.c.i}(\m V/X \xrightarrow{h}  S)  @>> {\widetilde {\m M{T_y}_*}} > \bH(X \xrightarrow{h}  S)\otimes \bQ[y] , \endCD
$$
\end{cor}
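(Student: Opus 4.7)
The plan is to transport the non-bivariant Verdier--Riemann--Roch formula (Theorem \ref{VRR-HM}) across the Poincar\'e duality isomorphism $\bullet [S]: \bH(X \xrightarrow{h} S) \otimes \bQ \overset{\cong}{\to} H^{BM}_*(X) \otimes \bQ$, which is available because we are under the standing hypothesis (inherited from the previous corollary) that $S$ is a $\bQ$-homology manifold. Since $\bullet [S]$ is an isomorphism, it suffices to show that the two compositions $\widetilde{\m M{T_y}_*} \circ f^*$ and $\bigl((-1)^{\op{dim}(f)} td_{(y)}(T_f) \bullet U_f \bullet -\bigr) \circ \widetilde{\m M{T_y}_*}$ agree after post-composition with $\bullet [S]$.

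Fix a generator $\alpha = [W \xrightarrow{p} Y] \in K^{\Cal Prop}_{\ell.c.i}(\m V/Y \xrightarrow{k} S)$. For the left--bottom route, the defining relation $\widetilde{\m M{T_y}_*}(-) \bullet [S] = \m M{T_y}_*(-)$ built into the construction of the bivariant class rewrites $\widetilde{\m M{T_y}_*}(f^*\alpha) \bullet [S]$ as $\m M{T_y}_*(f^*\alpha)$. For the top--right route, associativity of $\bullet$ gives
$$\bigl((-1)^{\op{dim}(f)} td_{(y)}(T_f) \bullet U_f \bullet \widetilde{\m M{T_y}_*}(\alpha)\bigr) \bullet [S] = (-1)^{\op{dim}(f)} td_{(y)}(T_f) \bullet U_f \bullet \m M{T_y}_*(\alpha).$$
Now I would invoke two standard facts from Fulton--MacPherson's bivariant theory: (a) for a smooth morphism $f$, the operation $U_f \bullet : H^{BM}_*(Y) \to H^{BM}_*(X)$ coincides with the Gysin pullback $f^*$; and (b) the bivariant product of a class in $\bH(X \xrightarrow{\op{id}} X) = H^*(X)$ with a Borel--Moore class coincides with the cap product. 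Together they reduce the right--hand side to $(-1)^{\op{dim}(f)}\, td_{(y)}(T_f) \cap f^*\bigl(\m M{T_y}_*(\alpha)\bigr)$.

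Finally, by Theorem \ref{VRR-HM} combined with the identity $f^{**} = (-1)^{\op{dim}(f)} f^*$, one has
$$\m M{T_y}_*(f^*\alpha) = td_{(y)}(T_f) \cap f^{**}\bigl(\m M{T_y}_*(\alpha)\bigr) = (-1)^{\op{dim}(f)}\, td_{(y)}(T_f) \cap f^*\bigl(\m M{T_y}_*(\alpha)\bigr),$$
which exactly matches the expression obtained for the top--right route. Hence the two compositions agree after applying the isomorphism $\bullet [S]$, and therefore they agree in $\bH(X \xrightarrow{h} S) \otimes \bQ[y]$. The main point requiring care is the sign bookkeeping: one must verify that the explicit prefactor $(-1)^{\op{dim}(f)}$ on the bivariant side exactly absorbs the sign produced by the twisted Gysin pullback $f^{**}$ on the homological side, and that the distinction between the twisted and untwisted pushforward/pullback does not introduce extra factors when passing through $\bullet [S]$. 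Once this is checked the corollary follows immediately from Theorem \ref{VRR-HM}.
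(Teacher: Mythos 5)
Your proposal is correct and takes essentially the same route as the paper: both arguments reduce the bivariant statement to Theorem \ref{VRR-HM} by post-composing with the isomorphism $\bullet [S]$ (valid since $S$ is a $\bQ$-homology manifold), using the relation $\widetilde{\m M{T_y}_*}(-)\bullet [S]=\m M{T_y}_*(-)$ together with the Fulton--MacPherson identity $U_f\bullet\beta=f^*\beta$ and the sign bookkeeping $f^{**}=(-1)^{\op{dim}(f)}f^*$. The only difference is that you verify explicitly, via associativity of $\bullet$, the intermediate commutative square that the paper simply asserts.
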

\begin{proof} The commutativity of the above diagram follows from Theorem \ref{VRR-HM}, the following commutative diagram
$$\CD
\bH(Y \xrightarrow{k}  S)\otimes \bQ[y]  @> {\bullet [S] } >> \bH_*^{BM}(Y) \otimes \bQ[y] \\
@V {(-1)^{\op{dim}(f)} td_{(y)}(T_f) \bullet U_f \bullet} VV @VV td_{(y)}(T_f) \cap f^{**} V \\
\bH(X \xrightarrow{h}  S)\otimes \bQ[y] @>> {\bullet [S]} > \bH_*^{BM}(X) \otimes \bQ[y] , \endCD
$$
and the fact (see \cite{Fulton-MacPherson}) that for any $\beta \in \bH(Y \to pt) = H_*^{BM}(Y)$
$$U_f \bullet \beta = f^*\beta$$
and also using the fact that $\bullet [S]: \bH(X \xrightarrow{h}  S) \overset {\cong}\longrightarrow  H_*^{BM}(X)$ is an isomorphism.
\end{proof} 

{\bf Acknowledgements:}  The author thanks Laurentiu Maxim  and J\"org Sch\"urmann for sending him their preprint \cite{CMSS} and some useful comments, and also Paolo Aluffi for some comments.

\end{document}